\newtheorem{thm}{Theorem}[section]
\newtheorem{cor}[thm]{Corollary}
\newtheorem{lem}[thm]{Lemma}
\newtheorem{prop}[thm]{Proposition}
\newtheorem{defn}[thm]{Definition}
\theoremstyle{remark}
\def\sph{\mathbb{S}^{d-1}}
\def\f{\frac}
 \def\a{{\alpha}} 
 \def\b{{\beta}}
 \def\l{{\lambda}}
 \def\o{{\omega}}
 \def\s{{\sigma}}
 \def\la{{\langle}}
 \def\ra{{\rangle}}
 \def\CD{{\mathcal D}}
 \def\CH{{\mathcal H}}
 \def\CP{{\mathcal P}}
 \def\CV{{\mathcal V}}
 \def\BB{{\mathbb B}}
 \def\NN{{\mathbb N}}
 \def\RR{{\mathbb R}}
  \def\SS{{\mathbb S}}
\newcommand{\wt}{\widetilde}
\begin{document}
 
\title[Sobolev orthogonal polynomials on the unit ball]
{Weighted Sobolev orthogonal polynomials on the unit ball}
\author[T. E. P\'erez]{Teresa E. P\'erez}
\address[T. E. P\'erez]{{Departamento de Matem\'atica Aplicada\\
Universidad de Granada\\
18071 Granada, Spain}}\email{tperez@ugr.es}
\thanks{{The work of the first and second authors was supported in part 
        by Ministerio de Ciencia e Innovaci\'on (Micinn)
        of Spain and by the European Regional Development Fund
        (ERDF) through the grant MTM2011-28952-C02-02}}

\author[M. A. Pi\~{n}ar]{Miguel A. Pi\~{n}ar}
\address[M. A. Pi\~{n}ar]{{Departamento de Matem\'atica Aplicada\\
Universidad de Granada\\
18071 Granada, Spain}}\email{mpinar@ugr.es}

\author[Yuan Xu]{Yuan Xu}
\address[Yuan Xu]{Department of Mathematics\\ University of Oregon\\
    Eugene, Oregon 97403-1222.}\email{yuan@math.uoregon.edu}
\thanks{The work of the third author was supported in part by 
NSF Grant DMS-1106113.}

\date{\today}
\keywords{Sobolev orthogonal polynomials, unit ball, gradient}
\subjclass[2000]{33C50, 42C10}

\begin{abstract}
For the weight function $W_\mu(x) = (1-\|x\|^2)^\mu$, $\mu > -1$, $\lambda  > 0$ {and 
$b_\mu$ a normalizing constant}, a family of mutually 
orthogonal polynomials on the unit ball with respect to the inner product 
$$
 \la f ,g \ra =  {b_\mu \left[\int_{\BB^d} f(x) g(x)  W_\mu(x)  dx +
    \lambda  \int_{\BB^d} \nabla f(x)\,\cdot \nabla g(x) W_\mu(x) dx\right]}
$$ 
are constructed in terms of spherical harmonics and a sequence of Sobolev orthogonal polynomials 
of one variable. The latter ones, hence, the orthogonal polynomials with respect to $\la \cdot,\cdot\ra$,
can be generated through a recursive formula. 
\end{abstract}

\maketitle

\section{Introduction}
\setcounter{equation}{0}
 
The orthogonal polynomials on the unit ball  $\BB^d$ of $\RR^d$ have been well understood, 
especially those that are orthogonal with respect to the inner {product} 
\begin{equation} \label{eq:ordinary-ip}
  \la f, g\ra_\mu : = {b_\mu}\int_{\BB^d} f(x) g(x) W_\mu(x) dx,
\end{equation}
where $W_\mu(x): = (1-\|x\|^2)^\mu$ on $\BB^d$, $\mu > -1$, {and 
$b_\mu$ is a normalizing constant such that $\la 1 ,1 \ra_\mu = 1$}. The condition $\mu > -1$ is necessary
for the inner product to be well defined. The orthogonal polynomials for $\la f, g \ra_\mu$ are often called 
classical and they are eigenfunctions of a second order linear differential operator ${D_\mu}$. 

The purpose of this paper is to study orthogonal polynomials on the unit ball with respect to an 
inner product that involves the integral of $\nabla f \cdot \nabla g$. The study in this direction was 
initiated in \cite{Xu08}, where the inner product
\begin{equation} \label{eq:first-ip}
  \la f,g \ra_{I} : = {\frac{\l}{\o_d}} \int_{\BB^d} \nabla f(x) \cdot \nabla g(x) dx + {\frac{1}{\o_d}} \int_{\sph} f(x) g(x) d\s, \quad \l  >0, 
\end{equation}
and another one that has the last term in the right hand side of $\la f, g\ra_I$ replaced by ${f(0)g(0)}$
were considered. The orthogonal polynomials with respect to the inner product $\la f, g \ra_I$ have the 
distinction that  they are eigenfunctions of the second order partial differential operator ${D_{-1}}$, which is 
the limiting case of ${D_\mu}$ when $\mu \to -1$ (\cite{PX08}). For the inner product $\la f, g \ra_I$, the main 
term is the first term, the integral over $\BB^d$, which however is zero if both $f$ and $g$ are constant, and
the second term is necessary to make sure that $\la f , f \ra_I =0$ implies $f =0$ almost everywhere.  

In the present paper, we first consider an extension of $\la f, g\ra_I$ that has the weight function 
$W_\mu$ in the integral over $\BB^d$ and will construct an orthonormal basis for this extension. 
We then study orthogonal polynomials with respect to the inner product
\begin{equation} \label{eq:main-ip}
 \la f ,g \ra_{\mu,\BB^d}
     =  {b_\mu \left[\int_{\BB^d} f(x) g(x)  W_\mu(x)  dx +
    \lambda  \int_{\BB^d} \nabla f(x)\,\cdot \nabla g(x) W_\mu(x) dx\right]},
\quad \l > 0.
\end{equation}  
In this case, both the first term and the second term are over the unit ball and it 
{is} no longer clear which
one is the dominating term. Nevertheless, we shall construct a sequence of orthogonal polynomials 
with respect to $ \la f ,g \ra_{\mu,\BB^d}$ explicitly, which depends on a two--parameter family of Sobolev 
polynomials of one variable. The latter ones can be expressed as a sum of Jacobi polynomials with 
the coefficients computed by a recursive relation. The norm of the orthogonal polynomials with 
respect to $ \la f ,g \ra_{\mu,\BB^d}$ can also be computed by a simple recursive relation. Such an
orthogonal basis can be useful in the spectral method for PDE \cite{Li}. 

In contrast to the case of one variable, the Sobolev orthogonal polynomials in several variables have
been studied only in a few cases and mostly on the unit ball \cite{AH, PI, PX08, Xu06, Xu08}. The setting 
of the unit ball has turned out to be more 
accessible and has revealed several interesting phenomena, which has stimulated and provided hint 
for the structure of the Sobolev orthogonal polynomials on other domains (see \cite{AX}). The present 
work is a further study on the unit ball, the results illustrate the impact of the secondary term in the 
Sobolev inner product, and also reveal further properties of the classical orthogonal polynomials on 
the unit ball. 

The paper is organized as follows. In the next section, we state the background materials and 
prove several properties of orthogonal polynomials on the unit ball and spherical harmonics that
involve the action of $\nabla$. Orthogonal polynomials with respect to the Sobolev inner product 
that extends \eqref{eq:first-ip} to weighted setting are discussed in Section 3, while those that are 
orthogonal with respect to $\la \cdot, \cdot \ra_{\mu, \BB^d}$ in \eqref{eq:main-ip} are discussed in 
Section 5. The Sobolev orthogonal polynomials of one variable that will be needed for the construction 
in the Section 5 are studied in Section 4. 

\section{Orthogonal polynomials on the ball and spherical harmonics}
\setcounter{equation}{0}

In this section we describe background materials on orthogonal polynomials and spherical harmonics 
that we shall need. The first subsection recalls the basic results on the classical orthogonal polynomials 
on the unit ball. The second subsection contains several lemmas on the spherical harmonics under the
action of the gradient operator. The third subsection collects properties on the Jacobi polynomials that 
we shall need later. 

\subsection{Orthogonal polynomials on the unit ball}
For $x,y  \in \RR^d$, we use the usual notation of $\|x\|$ and $\la x,y \ra$ to denote the Euclidean 
norm of $x$ and the dot product of $x,y$.  The unit ball and the unit sphere in $\mathbb{R}^d$ are 
{denoted}, respectively, by
$$
\BB^d :=\{x\in \mathbb{R}^d: \|x\| \le 1\} \qquad \textrm{and} \qquad \SS^{d-1}:=\{\xi\in \mathbb{R}^d: \|\xi\| = 1\}.
$$
For $\mu \in \RR$, let $W_\mu$ be the weight function defined by
$$
    W_\mu(x) = (1-\|x\|^2)^\mu, \qquad  \|x\| < 1.
$$
The function $W_\mu$ is integrable on the unit {ball} if $\mu > -1$, for which we denote the normalization 
constant of $W_\mu$ by ${b_\mu}$, 
$$
b_\mu := \left(\int_{{\BB^d}}\, W_\mu(x) \, dx\right)^{-1} = \frac{\Gamma(\mu + d/2 + 1)}{\pi^{d/2}\Gamma(\mu+1)}.
$$
The classical orthogonal polynomials on the unit ball are orthogonal with respect to the inner product
\begin{equation}\label{ball-ip}
   \la f,g \ra_\mu = b_\mu \int_{{\BB^d}}\, f(x)\, g(x) \, W_\mu(x) \, dx,
\end{equation}
which is normalized so that $\la 1,1\ra_\mu = 1$. 

Let $\Pi^d$ denote the space of polynomials in $d$ real variables. For $n = 0,1,2,\ldots,$
let $\Pi_n^d$ denote the linear space of polynomials in several variables of (total) degree 
at most $n$ and let $\CP_n^d$ denote the space of homogeneous polynomials of degree 
$n$. It is well known that 
$$
  \dim \Pi_n^d = \binom{n+d}{n} \quad \hbox{and} \quad \dim \CP_n^d = \binom{n+d-1}{n}:= r_n^d.
$$
A polynomial $P \in \Pi_n^d$ is called orthogonal with respect to $W_\mu$ on the ball if 
$\la P, Q\ra_\mu =0$ for all $Q \in \Pi_{n-1}^d$, that is, if it is orthogonal to all polynomials
of less {degree}. Let $\CV_n^d(W_\mu)$ denote the space of orthogonal polynomials of total
degree $n$ with respect to $W_\mu$. Then $\dim \, \mathcal{V}_n^d(W_\mu) = r_n^d.$

For $n\ge 0$, let $\{P^n_{\alpha}(x) : |\alpha|=n\}$ denote a basis of $\mathcal{V}_n^d(W_\mu)$. 
Then every element of $\CV_n^d(W_\mu)$ is orthogonal to polynomials of less degree. If the 
elements of the basis are also orthogonal to each other, that is, $\la P_\a^n, P_\beta^n \ra_\mu=0$ 
whenever $\a \ne \b$, we call the basis mutually orthogonal. If, in addition,  
$\la P_\a^n, P_\a^n \ra_\mu =1$, we call the basis orthonormal. 


In spherical--polar coordinates $x = r \xi$, $ r > 0$ and $\xi \in \sph$, a mutually orthogonal basis 
of $\CV_n^d(W_\mu)$ can be given in terms of the Jacobi polynomials and spherical harmonics. 

Harmonic polynomials of $d$-variables are homogeneous polynomials in $\CP_n^d$ that satisfy 
the Laplace equation $\Delta Y = 0$, where 
$\Delta = \f {\partial^2}{\partial x_1^2} + \ldots + \f{\partial^2}{\partial x_d^2}$
is the usual Laplace operator. Let $\mathcal{H}_n^d$ denotes the space of harmonic polynomials
of degree $n$. It is well know that
$$
         a_n^d: = \dim \mathcal{H}_n^d = \binom{n+d-1}{n} - \binom{n+d-3}{n}.
$$
Spherical harmonics are the restriction of harmonic polynomials on the unit sphere. If $Y \in
\mathcal{H}_n^d$, {then} $Y(x) = r^n Y(\xi)$ in spherical--polar {coordinates} 
$x = r \xi$, so that 
$Y$ is uniquely determined by its restriction on the sphere. We shall also use $\CH_n^d$ to 
denote the space of spherical harmonics of degree $n$. Let $d \s$ denote the surface measure
and $\o_d$ denote the surface area, 
$$
  \omega_d := \int_{\SS^{d-1}} d\s = \frac{2\, \pi^{d/2}}{\Gamma(d/2)}.
$$
The spherical harmonics of different degrees are orthogonal with respect to the inner product
$$
   \la f, g \ra_{\sph}: = \f{1}{\o_d} \int_{\sph} f(\xi) g(\xi) d\s(\xi). 
$$

Let $P_n^{(\a,\b)}(t)$ denotes the usual Jacobi polynomial of degree $n$, which is orthogonal with
respect to the weight function 
$$
   w_{\a,\b}(t) = (1-t)^\a (1+t)^\b, \qquad \a,\b > -1, \quad t \in [-1,1].
$$
We can now state the orthogonal basis of $\CV_n^d(W_\mu)$ in spherical--polar coordinates (see, for 
example, \cite{DX01}). 

\begin{lem}
For $n \in \NN_0$ and $0 \le j \le n/2$, let $\{Y_\nu^{n-2j}: 1\le \nu\le a_{n-2j}^d\}$ denote
an orthonormal basis for $\mathcal{H}_{n-2j}^d$. Define 
\begin{equation}\label{baseP}
P_{j,\nu}^{n}(x) = P_{j}^{(\mu, n-2j + \frac{d-2}{2})}(2\,\|x\|^2 -1)\, Y_\nu^{n-2j}(x). 
\end{equation}
Then the set $\{P_{j,\nu}^{n}(x): 1 \le j \le n/2, \,1 \le \nu \le a_{n-2j}^d \}$ is a mutually
orthogonal basis of $\CV_n^d(W_\mu)$. More precisely, 
$$
\la P_{j,\nu}^{n}(x), P_{k,\eta}^{m}(x)\ra_\mu =  H_{j,n}^{\mu}  \delta_{n,m}\,\delta_{j,k}\,\delta_{\nu,\eta},
$$
where $ H_{j,n}^{\mu}$ is given by 
\begin{equation} \label{eq:Hjn-mu}
 H_{j,n}^{\mu}: = \frac{(\mu +1)_j (\frac{d}{2})_{n-j} (n-j+\mu+ \frac{d}{2})}
    { j! (\mu+\frac{d+2}{2})_{n-j} (n+\mu+ \frac{d}{2})}, 
\end{equation} 
{where $(a)_n= a(a+1) \ldots (a+n-1)$ denotes the Pochhammer symbol.} 
\end{lem}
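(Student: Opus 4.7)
The plan is separation of variables in spherical--polar coordinates $x = r\xi$ with $r \in [0,1]$ and $\xi \in \sph$. Under this change $dx = r^{d-1} dr\, d\s(\xi)$, $W_\mu(x) = (1-r^2)^\mu$, and the homogeneity of spherical harmonics, $Y_\nu^{n-2j}(x) = r^{n-2j} Y_\nu^{n-2j}(\xi)$, factors the integrand of $\la P_{j,\nu}^{n}, P_{k,\eta}^{m}\ra_\mu$ into a purely radial factor times a purely angular factor. The angular factor is $\f{1}{\o_d}\int_\sph Y_\nu^{n-2j}(\xi) Y_\eta^{m-2k}(\xi) d\s$, which vanishes unless $n-2j = m-2k$ by orthogonality of spherical harmonics of different degree, and equals $\delta_{\nu,\eta}$ in the diagonal case by the orthonormality of $\{Y_\nu^{n-2j}\}$.

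Assuming $n-2j = m-2k$, the substitution $t = 2r^2-1$ converts the radial factor into an integral of the form
$$
c_{\mu,n,j}\int_{-1}^1 P_j^{(\mu,\b)}(t) P_k^{(\mu,\b)}(t) (1-t)^\mu (1+t)^\b dt, \qquad \b := n-2j+\f{d-2}{2},
$$
for an explicit constant $c_{\mu,n,j}$. Jacobi orthogonality then forces $j = k$ (and hence $n = m$), proving mutual orthogonality. In the diagonal case $j=k$, the classical $L^2$--norm formula for $P_j^{(\mu,\b)}$ combined with the explicit values of $b_\mu$ and $\o_d$ gives the normalization, which after Pochhammer bookkeeping can be massaged into the form \eqref{eq:Hjn-mu}.

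It remains to verify that each $P_{j,\nu}^n$ actually lies in $\CV_n^d(W_\mu)$. It has total degree $2j + (n-2j) = n$, and the mutual orthogonality established above shows the whole family $\{P_{j,\nu}^{m} : m\ge 0\}$ is linearly independent. For fixed $n$, the subfamily with $m < n$ has cardinality $\sum_{m=0}^{n-1} r_m^d = \binom{n-1+d}{n-1} = \dim \Pi_{n-1}^d$ by the hockey--stick identity, so it is a basis of $\Pi_{n-1}^d$. Therefore $P_{j,\nu}^n$ is orthogonal to $\Pi_{n-1}^d$, so $P_{j,\nu}^n \in \CV_n^d(W_\mu)$; the count $\sum_{j} a_{n-2j}^d = r_n^d = \dim \CV_n^d(W_\mu)$ confirms that these polynomials exhaust the space.

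The only genuinely nontrivial step is the normalization: one must reconcile the Gamma--function expression arising from the Jacobi $L^2$ norm with the Pochhammer form in \eqref{eq:Hjn-mu}, keeping careful track of the factors $b_\mu$, $\o_d$ and the Jacobian from $r$ to $t$. The rest is a clean separation--of--variables argument.
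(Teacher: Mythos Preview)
Your argument is correct and is the standard separation--of--variables proof. Note, however, that the paper does not actually prove this lemma: it is stated as a known result with a citation to \cite{DX01}, so there is no ``paper's own proof'' to compare against. Your sketch is precisely the argument one finds in such references, and the dimension--counting you include to verify that the $P_{j,\nu}^n$ span $\CV_n^d(W_\mu)$ (rather than merely being mutually orthogonal) is a nice touch that is often glossed over.
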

It is known that orthogonal polynomials with respect to $W_\mu$ are eigenfunctions of a second order
differential operator $\CD_\mu$. More precisely, we have
\begin{align} \label{eq:Bdiff}
    \CD_\mu P =  -(n+d) (n + 2 \mu)P, \qquad \forall P \in \CV_n^d(W_\mu), 
\end{align} 
where 
\begin{equation*}
  \CD_\mu := \Delta  - \sum_{j=1}^d \frac{\partial}{\partial x_j} x_j \left[
  2 \mu  + \sum_{i=1}^d x_i \frac{\partial  }
  {\partial x_i} \right].
\end{equation*}

\subsection{Spherical harmonics}
For later study, we will need several properties of spherical harmonics. 
Let $\nabla$ denote the gradient operator
$$
     \nabla f := (\partial_1 f, \partial_2 f, \ldots, \partial_d f)^T,
$$
where $\partial_k$ denotes the partial derivative in the $k$--th variable. We denote the dot product of 
$\nabla f$ and $\nabla g$ by $\nabla f \cdot \nabla g$.

In spherical--polar coordinates $x = r \xi$, $r \ge 0$ and $\xi \in \sph$, the differential operators $\nabla$
and $\Delta$ can be decomposed as follows (cf. \cite{DX12}): 
\begin{align}
\nabla = & \frac{1}{r}\nabla_0 + \xi \frac{\partial}{\partial r}, \label{nabla-radial} \\
\Delta = & \frac{\partial^2}{\partial r^2} + \frac{d-1}{r}\frac{\partial}{\partial r} + \frac{1}{r^2}\Delta_0. \label{L-B-operator}
\end{align}
The operators $\nabla_0$ and $\Delta_0$ are the spherical parts of the gradient and the Laplacian,
respectively. The operator $\Delta_0$ is called the Laplace--Beltrami operator, which has spherical harmonics as
its eigenfunctions. More precisely, it holds(cf. \cite{DX12})
\begin{equation}\label{eigen}
\Delta_0 Y (\xi) = -n(n+d-2)Y(\xi), \quad \forall \,Y\, \in\,\mathcal{H}_n^d, \quad \xi \in \SS^{d-1}.
\end{equation}

In addition, harmonic polynomials satisfy the following properties:
 
\begin{lem} \label{lem:2.2-harmonic}
Let $\{Y_\nu^n: 1 \le \nu \le a_n^d\}$ be an orthonormal basis of $\CH_n^d$. Let $x = r \xi$, with $r > 0$
and $\xi\in \SS^{d-1}$. Then
\begin{enumerate}[\rm (i)]
\item $\xi \cdot\nabla_0 Y_\nu^{n}(x)= 0,$
\item $\displaystyle{ \nabla Y_\nu^{n}(x)\cdot \nabla Y_\eta^{m}(x) =
    \frac{1}{r^2}\,\nabla_0 Y^{n}_\nu(x)\cdot \nabla_0 Y^{m}_\eta (x) 
 +\frac{n\,m}{r^2} Y^{n}_\mu(x)Y^{m}_\eta(x)}$.

\item For $1\le \nu \le a_n^d$ and $1\le \eta \le a_m^d$, the following relation holds
\begin{equation}\label{nabla-Y-int}
  \frac{1}{\o_d} \int_{\SS^{d-1}}\nabla Y_\nu^{n}(\xi)\cdot \nabla Y_\eta^{m}(\xi) d\s (\xi)
     = n(2n+d-2) \delta_{n,m}\delta_{\nu,\eta}. 
\end{equation}
\end{enumerate}
\end{lem}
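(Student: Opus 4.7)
The plan is to build everything from two facts: the spherical--polar decomposition \eqref{nabla-radial} of $\nabla$, and the homogeneity $Y_\nu^n(x) = r^n Y_\nu^n(\xi)$ of spherical harmonics. Part (i) is the statement that $\nabla_0 Y_\nu^n$ is purely tangential to $\sph$, part (ii) is a direct algebraic expansion that uses (i), and part (iii) is obtained from (ii) by integrating and applying integration by parts on $\sph$ together with \eqref{eigen}.

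For (i), I would apply \eqref{nabla-radial} to $Y_\nu^n(x) = r^n Y_\nu^n(\xi)$ to get $\nabla Y_\nu^n(x) = \frac{1}{r}\nabla_0 Y_\nu^n(x) + \frac{n}{r} Y_\nu^n(x)\,\xi$. Dotting with $\xi$ and comparing with Euler's homogeneity identity $x\cdot\nabla Y_\nu^n = n Y_\nu^n$ (equivalently, $\xi\cdot\nabla Y_\nu^n = (n/r)Y_\nu^n$) forces $\xi\cdot\nabla_0 Y_\nu^n(x) = 0$.

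For (ii), I would expand $\nabla Y_\nu^n \cdot \nabla Y_\eta^m$ using \eqref{nabla-radial} for both factors, producing four terms. The two cross terms carry a factor of $\xi\cdot\nabla_0 Y$ and drop out by (i); the remaining contribution is $\frac{1}{r^2}\nabla_0 Y_\nu^n \cdot \nabla_0 Y_\eta^m + (\partial_r Y_\nu^n)(\partial_r Y_\eta^m)$. Evaluating the radial derivatives by homogeneity, $\partial_r Y_\nu^n = (n/r)Y_\nu^n$ and $\partial_r Y_\eta^m = (m/r)Y_\eta^m$, converts the second piece into $(nm/r^2)Y_\nu^n Y_\eta^m$, which is the claimed identity.

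For (iii), I would restrict (ii) to $r=1$ and integrate over $\sph$. Orthonormality of $\{Y_\nu^n\}$ handles the $nm\,Y_\nu^n Y_\eta^m$ piece, yielding $nm\,\delta_{n,m}\delta_{\nu,\eta}$. For the remaining term $\int_\sph \nabla_0 Y_\nu^n \cdot \nabla_0 Y_\eta^m\,d\s$ I would use integration by parts on the boundaryless manifold $\sph$ to rewrite it as $-\int_\sph Y_\nu^n\,\Delta_0 Y_\eta^m\,d\s$, then apply the eigenvalue relation \eqref{eigen} to obtain $m(m+d-2)\delta_{n,m}\delta_{\nu,\eta}$. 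Adding the two contributions and collapsing to $n=m$ gives $[m(m+d-2)+nm]\delta_{n,m}\delta_{\nu,\eta} = n(2n+d-2)\delta_{n,m}\delta_{\nu,\eta}$. The only non--trivial step is the surface integration by parts identity $\int_\sph \nabla_0 f\cdot\nabla_0 g\,d\s = -\int_\sph f\,\Delta_0 g\,d\s$, but this is standard since $\sph$ has no boundary, and the rest is bookkeeping with homogeneity and orthonormality.
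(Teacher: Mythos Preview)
Your proposal is correct and follows essentially the same route as the paper: Euler's identity plus \eqref{nabla-radial} for (i), a direct expansion using (i) for (ii), and for (iii) the surface integration-by-parts identity you mention is exactly the Green's formula \eqref{Green-Beltrami} the paper invokes before applying \eqref{eigen}. The only cosmetic difference is that the paper moves $\Delta_0$ onto $Y_\nu^n$ rather than $Y_\eta^m$, which is immaterial by symmetry.
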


\begin{proof} 
Since $Y_\nu^n$ is homogeneous, $\frac{d}{dr}Y_\nu^n(x) = n Y_\nu^n (x)/r$ and, by 
the Euler's equation for homogeneous polynomials, $x \cdot \nabla Y_\nu^n(x) = n Y_\nu^n(x)$.
Hence, by \eqref{nabla-radial} 
\begin{align*}
\xi \cdot \nabla_0 Y_\nu^{n}(x) 
 =  x \cdot \nabla Y_\nu^{n}(x) - r \frac{n}{r} Y_\nu^{n}(x) = n Y_\nu^{n}(x) - n Y_{{\nu}}^{n}(x)= 0.
\end{align*}

Using (i), the proof of (ii) follows from \eqref{nabla-radial} and a straightforward computation. 

For the proof of (iii), we will need the Green's formula on the sphere (cf. \cite{DX12}),
\begin{equation}\label{Green-Beltrami}
\int_{\SS^{d-1}} \nabla_0 f(\xi)\cdot\nabla_0 g(\xi) d\s(\xi) = - \int_{\SS^{d-1}} \Delta_0 f(\xi) g(\xi) d\s(\xi).
\end{equation}
Applying \eqref{Green-Beltrami} on the identity in (ii) and using \eqref{eigen}, we obtain
\begin{align*}
\lefteqn{\int_{\SS^{d-1}}\nabla Y_\nu^{n}(\xi)\cdot \nabla Y_\eta^{m}(\xi) d\s(\xi)}\\
 &=  \int_{\SS^{d-1}}\nabla_0 Y_\nu^{n}(\xi)\cdot \nabla_0 Y_\eta^{m}(\xi)d\s(\xi) + 
    nm \int_{\SS^{d-1}}Y_\nu^{n}(x)Y_\eta^{m}(x)d\s(\xi)\\
&= \int_{\SS^{d-1}}\Delta_0 Y_\nu^{n}(\xi)\,Y_\eta^{m}(\xi)d\s(\xi) + 
  n m \omega_{d}\delta_{n,m}\delta_{\nu,\eta}\\
&=  n(n+d-2)\int_{\SS^{d-1}} Y_\nu^{n}(\xi)Y_\eta^{m}(\xi)d\s(\xi) + 
n^2 \omega_{d}\delta_{n,m}\delta_{\nu,\eta}\\
&= n(2n+d-2) \o_{d}\delta_{n,m}\delta_{\nu,\eta}.
\end{align*}
This completes the proof. 
\end{proof}

\begin{cor} \label{lem:3.4}
Let $\{Y_\nu^n: 1 \le \nu \le a_n^d\}$ be an orthonormal basis of $\CH_n^d$. For $\mu > -1$, 
$n\neq m$, $1 \le \nu \le a_n^d$ and $1 \le \eta \le a_m^d$, 
\begin{enumerate}[ \rm(i)]
\item $\displaystyle{b_\mu \int_{\BB^d} Y_\nu^{n}(x)  Y_\eta^{m}(x)W_\mu(x) dx 
  = \frac{(\frac d  2)_n}{(\mu+1 +\frac d 2)_n}  \delta_{n,m} \delta_{\nu,\eta} }$
\medskip
\item
 $\displaystyle{b_\mu \int_{\BB^d} \nabla Y_\nu^{n}(x) \cdot \nabla Y_\eta^{m}(x)W_{\mu+1}(x) dx = 
    2n (\mu+1) \frac{(\frac d  2)_n}{(\mu+1 + \frac d 2)_n}  \delta_{n,m} \delta_{\nu,\eta}.}$
\end{enumerate}
\end{cor}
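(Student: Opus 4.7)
The plan is to reduce both identities to one-dimensional Beta integrals by passing to spherical--polar coordinates $x = r\xi$ and exploiting the homogeneity of $Y_\nu^n$ and $\nabla Y_\nu^n$, then using the angular identities provided by orthonormality and by \lemref{lem:2.2-harmonic}(iii).

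For part (i), I would write $Y_\nu^n(x) = r^n Y_\nu^n(\xi)$ and decompose
\begin{equation*}
\int_{\BB^d} Y_\nu^n(x) Y_\eta^m(x) W_\mu(x)\, dx = \int_0^1 r^{n+m+d-1}(1-r^2)^\mu\, dr \int_{\sph} Y_\nu^n(\xi) Y_\eta^m(\xi)\, d\s(\xi).
\end{equation*}
The angular factor is $\o_d\,\delta_{n,m}\delta_{\nu,\eta}$ by orthonormality, and when $n=m$ the radial factor is a half-Beta integral equal to $\tfrac12 B(n+d/2,\mu+1)$. Plugging in $b_\mu = \Gamma(\mu+d/2+1)/[\pi^{d/2}\Gamma(\mu+1)]$ and $\o_d = 2\pi^{d/2}/\Gamma(d/2)$ and collapsing the Gamma quotients via $\Gamma(n+d/2)/\Gamma(d/2) = (d/2)_n$ and $\Gamma(\mu+d/2+1)/\Gamma(n+\mu+d/2+1) = 1/(\mu+d/2+1)_n$ yields the stated constant.

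For part (ii), the key observation is that $\nabla Y_\nu^n$ is a vector of polynomials homogeneous of degree $n-1$, so $\nabla Y_\nu^n(r\xi)\cdot\nabla Y_\eta^m(r\xi) = r^{n+m-2}\nabla Y_\nu^n(\xi)\cdot\nabla Y_\eta^m(\xi)$. Therefore, again in spherical coordinates,
\begin{equation*}
\int_{\BB^d} \nabla Y_\nu^n(x)\cdot \nabla Y_\eta^m(x)\,W_{\mu+1}(x)\,dx = \int_0^1 r^{n+m+d-3}(1-r^2)^{\mu+1}\,dr \int_{\sph} \nabla Y_\nu^n(\xi)\cdot \nabla Y_\eta^m(\xi)\,d\s(\xi).
\end{equation*}
The angular integral equals $\o_d\, n(2n+d-2)\delta_{n,m}\delta_{\nu,\eta}$ by \lemref{lem:2.2-harmonic}(iii), and when $n=m$ the radial integral equals $\tfrac12 B(n+d/2-1,\mu+2)$. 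Multiplying by $b_\mu$ and using $\Gamma(\mu+2)/\Gamma(\mu+1) = \mu+1$ together with $\Gamma(n+d/2-1)/\Gamma(d/2) = (d/2)_n/(n+d/2-1)$ gives
\begin{equation*}
n(2n+d-2)(\mu+1)\,\frac{(d/2)_n}{(n+d/2-1)(\mu+d/2+1)_n} = 2n(\mu+1)\frac{(d/2)_n}{(\mu+d/2+1)_n},
\end{equation*}
after using $2n+d-2 = 2(n+d/2-1)$. This is the claimed formula.

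There is no genuine obstacle: both parts are routine once the spherical decomposition is in place, so the only care needed is bookkeeping of Pochhammer symbols, and in (ii) the small algebraic observation that the factor $n+d/2-1$ produced by $\Gamma(n+d/2-1)/\Gamma(n+d/2)$ cancels exactly half of $2n+d-2$ to leave the clean constant $2n(\mu+1)$.
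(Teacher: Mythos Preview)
Your argument is correct and follows exactly the paper's approach: pass to spherical--polar coordinates, use homogeneity to separate the radial and angular parts, invoke orthonormality on $\sph$ for (i) and \lemref{lem:2.2-harmonic}(iii) for (ii), then evaluate the remaining Beta integral. The paper carries out (i) in the same way and for (ii) simply notes that $\nabla Y_\nu^n(x) = r^{n-1}(\nabla Y_\nu^n)(\xi)$ and says the computation is analogous; your write-up is in fact more explicit than the paper's on the Gamma/Pochhammer bookkeeping.
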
 

\begin{proof}
The proof uses the following well known identity 
\begin{equation}\label{changevar}
        \int_{\BB^d} f(x) dx = \int_0^1 r^{d-1}\int_{\SS^{d-1}}f(r\,\xi)\, d\s (\xi)\,dr
\end{equation}
that arises from the spherical--polar coordinates $x=r\,\xi$, $\xi\in \SS^{d-1}$. Since 
$Y_\nu^n$ is a homogeneous polynomial, $Y_n^\mu(x) = r^n Y_n^\mu(\xi)$, so that
$$
b_\mu \int_{\BB^d} Y_\nu^{n}(x)  Y_\eta^{m}(x)W_\mu(x) dx 
  = b_\mu \o_d \int_0^1 r^{2n + d-1}(1-r^2)^\mu dr \delta_{n,m} \delta_{\nu,\eta} 
$$
by the orthonormality of $Y_\mu^n$ with respect to $\la f, g \ra_{\sph}$, from which 
(i) follows upon evaluating the integral and using the fact that 
\begin{equation}\label{b-mu-omega}
   b_\mu \o_d = 2 \frac{\Gamma(\mu+1+ \frac d 2)}{ \Gamma(\mu+1)\Gamma(\frac d 2)}. 
\end{equation}
Since $\nabla Y_\mu^n(x) = r^{n-1} (\nabla Y_\mu^n)(\xi)$, (ii) follows similarly. 
\end{proof}

\subsection{Classical Jacobi polynomials}
We collect the properties of the classical Jacobi polynomials $P_n^{(\a,\b)}(t)$ that we shall need
below, most of which can be found in \cite[chapt. 22]{AS} and \cite{Sz}. The Jacobi polynomial 
$P_n^{(\alpha, \beta)}(t)$ is normalized by 
\begin{equation} \label{jac-norm}
P_n^{(\alpha,\beta)}(1) = \binom{n+\alpha}{n} = \frac{(\a+1)_n}{n!},
\end{equation}
For $\a, \b > -1$, these 
polynomials are orthogonal with respect to the Jacobi weight function
$$
   w_{\a,\b}(t) := (1-t)^\alpha(1+t)^{\beta}, \qquad  -1< x < 1,
$$
and their $L^2$ norms in $L^2(w_{\a,\b}, [-1,1])$ are given by 
\begin{align}\label{normJ}
h_n^{(\alpha,\beta)} := & \int_{-1}^1 P_{n}^{(\alpha, \beta)}(t)^2 \, w_{\alpha,\beta}(t)\,dt 
 = \frac{2^{\alpha+\beta+1}}{2n+\alpha+\beta+1} \frac{\Gamma(n+\alpha+1)\,\Gamma(n+\beta+1)}{n!\,\Gamma(n+\alpha+\beta+1)}.
\end{align}
The polynomial $P_n^{(\alpha,\beta)}(t)$ is of degree $n$ and its leading coefficient $k_n^{(\alpha,\beta)}$
is given by 
\begin{equation}\label{leadingcoef}
k_n^{(\alpha,\beta)} = \frac{1}{2^n}\, \binom{2n + \alpha + \beta}{n}.
\end{equation}
The derivative of a Jacobi polynomial is again a Jacobi {polynomial}, 
\begin{equation}\label{derJ}
\frac{d}{d t} P_n^{(\alpha,\beta)}(t) = \frac{n+\alpha + \beta+1}{2} P_{n-1}^{(\alpha+1,\beta+1)}(t).
\end{equation}

The following relations between different families of the Jacobi polynomials can be found in 
(\cite[Chapt. 22]{AS}):
\begin{eqnarray}
&~& (1+t) P_n^{(\alpha,\beta+1)}(t) = \frac{n+\beta+1}{2n+\alpha+\beta+2}P_n^{(\alpha,\beta)}(t) + \frac{n+1}{2n+\alpha+\beta+2}P_{n+1}^{(\alpha,\beta)}(t),\label{RAF2}\\
&~& P_n^{(\alpha,\beta)}(t) = a_n^{(\alpha,\beta)} P_n^{(\alpha+1,\beta)}(t) - b_n^{(\alpha,\beta)}P_{n-1}^{(\alpha+1,\beta)}(t),\label{RAF}\\
&~& P_n^{(\alpha,\beta)}(t) = a_n^{(\alpha,\beta)} P_n^{(\alpha,\beta+1)}(t) + b_n^{(\beta,\alpha)}P_{n-1}^{(\alpha,\beta+1)}(t),\label{RAF0}
\end{eqnarray}
where
\begin{equation}\label{coef-a-b}
a_n^{(\alpha,\beta)} = \frac{n+\alpha+\beta+1}{2\,n + \alpha + \beta+1}, \quad b_n^{(\alpha,\beta)} = \frac{n+\beta}{2\,n + \alpha + \beta+1}.
\end{equation}
They are used to prove the following two relations that we need later. 

\begin{lem} \label{lem:Jacobi-recur}
For $\a > -1$ and $\b > 0$, 
\begin{align}
(1+t)\,\frac{d}{d t} P_n^{(\alpha,\beta)}(t) = \beta \, P_{n-1}^{(\alpha+1,\beta)}(t) + n \,P_n^{(\alpha+1,\beta-1)}(t)
   \label{jac-*} \\
\beta P_{n}^{(\alpha,\beta)}(t) + (1+t)\,\frac{d}{d t} P_n^{(\alpha,\beta)}(t) = (\beta + n) \,P_n^{(\alpha+1,\beta-1)}(t)
\label{jac-**}
\end{align}
\end{lem}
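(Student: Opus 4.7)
My strategy is to turn both identities into polynomial identities in a single fixed basis of Jacobi polynomials, namely $\{P_{n-1}^{(\alpha+1,\beta)}(t),\,P_n^{(\alpha+1,\beta)}(t)\}$, and then match coefficients. The contiguous relations \eqref{RAF}, \eqref{RAF0}, \eqref{RAF2} together with the derivative formula \eqref{derJ} allow every other Jacobi family appearing in \eqref{jac-*}--\eqref{jac-**} to be rewritten in this basis.

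For \eqref{jac-*}, I would first use \eqref{derJ} to write
\[
(1+t)\,\frac{d}{dt}P_n^{(\alpha,\beta)}(t)=\tfrac{n+\alpha+\beta+1}{2}(1+t)P_{n-1}^{(\alpha+1,\beta+1)}(t),
\]
and then apply \eqref{RAF2} with $n\mapsto n-1,\,\alpha\mapsto\alpha+1$ to expand the factor $(1+t)P_{n-1}^{(\alpha+1,\beta+1)}(t)$ into the chosen basis. On the other side of \eqref{jac-*}, I would apply \eqref{RAF0} with $\alpha\mapsto\alpha+1,\,\beta\mapsto\beta-1$ to expand $P_n^{(\alpha+1,\beta-1)}(t)$ in the same basis. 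Comparing coefficients then reduces \eqref{jac-*} to the scalar identity
\[
(n+\alpha+\beta+1)(n+\beta)=\beta(2n+\alpha+\beta+1)+n(n+\alpha+1),
\]
which is immediate by expansion.

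For \eqref{jac-**}, I would deduce it from \eqref{jac-*} rather than repeat the whole argument. Subtracting \eqref{jac-*} from the claimed \eqref{jac-**}, the derivative terms cancel and what remains reduces (for $\beta>0$) to the contiguous relation
\[
P_n^{(\alpha,\beta)}(t)+P_{n-1}^{(\alpha+1,\beta)}(t)=P_n^{(\alpha+1,\beta-1)}(t).
\]
I would verify this by expanding the left side through \eqref{RAF} and the right side through \eqref{RAF0} (with $\alpha\mapsto\alpha+1,\,\beta\mapsto\beta-1$) in the same basis $\{P_n^{(\alpha+1,\beta)},P_{n-1}^{(\alpha+1,\beta)}\}$: the $P_n^{(\alpha+1,\beta)}$ coefficients agree because the ratio $(n+\alpha+\beta+1)/(2n+\alpha+\beta+1)$ defining $a_n^{(\alpha,\beta)}$ depends only on $\alpha+\beta$, while the $P_{n-1}^{(\alpha+1,\beta)}$ coefficients match via $(n+\alpha+1)+(n+\beta)=2n+\alpha+\beta+1$.

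The main obstacle is really just bookkeeping: choosing the correct index shifts in \eqref{RAF}, \eqref{RAF0}, \eqref{RAF2} so that every Jacobi family lands in the common basis $\{P_n^{(\alpha+1,\beta)},P_{n-1}^{(\alpha+1,\beta)}\}$. Once this basis choice is locked in, both identities collapse to one-line algebraic checks in $n,\alpha,\beta$.
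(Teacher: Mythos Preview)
Your proposal is correct and follows essentially the same route as the paper: for \eqref{jac-*} both arguments use \eqref{derJ} and \eqref{RAF2} on the left and \eqref{RAF0} to handle $P_n^{(\alpha+1,\beta-1)}$, the only difference being that the paper substitutes and simplifies while you expand both sides in the basis $\{P_{n-1}^{(\alpha+1,\beta)},P_n^{(\alpha+1,\beta)}\}$ and match coefficients. For \eqref{jac-**} both proofs reduce to the contiguous identity $P_n^{(\alpha,\beta)}+P_{n-1}^{(\alpha+1,\beta)}=P_n^{(\alpha+1,\beta-1)}$ and derive it from \eqref{RAF} and \eqref{RAF0}.
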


\begin{proof}
From \eqref{derJ} and \eqref{RAF2}, we deduce
\begin{align*}
(1+t)\,\frac{d}{d t} P_n^{(\alpha,\beta)}(t) =\ & (1+t)\,\frac{n+\alpha+\beta +1}{2}P_{n-1}^{(\alpha+1,\beta+1)}(t)\\
=\ & \frac{n+\alpha+\beta+1}{2n+\alpha+\beta+1} \left[(n+\beta)\,P_{n-1}^{(\alpha+1,\beta)}(t) + n\,P_n^{(\alpha+1,\beta)}(t)\right],
\end{align*}
replacing the last term by, according to \eqref{RAF0}, 
$$
P_{n}^{(\alpha+1,\beta)}(t) = \frac{2n+\alpha+ \beta +1}{n+\alpha + \beta +1}P_{n}^{(\alpha+1,\beta-1)}(t)-
\frac{n+\alpha + 1}{n+\alpha + \beta +1}P_{n-1}^{(\alpha+1,\beta)}(t)
$$
and simplifying the result, we obtain \eqref{jac-*}. The relation \eqref{jac-**} is deduced from 
\eqref{jac-*} upon using $P_n^{(\alpha,\beta-1)}(t) - P_n^{(\alpha-1,\beta)}(t) = P_{n-1}^{(\alpha,\beta)}(t)$,
which follows from \eqref{RAF} and \eqref{RAF0}.
\end{proof}

\section{Sobolev orthogonal polynomials with a spherical term}
\setcounter{equation}{0}

The first study of such orthogonal polynomials was initiated in \cite{Xu08}, where 
orthogonal polynomials with respect to the inner product 
\begin{equation}\label{ip1A}
  \langle f,g \rangle_\nabla :=  \frac{\l}{\o_d}  \int_{\BB^d} 
     \nabla  f(x) \cdot \nabla  g(x) dx + \frac{1}{\o_{d}} \int_{\SS^{d-1}} f(\xi)g(\xi) d\s(\xi)
\end{equation}
were studied. Let $\CV_n^d (\nabla)$ denote the space of orthogonal polynomials of degree $n$
with respect to $\la \cdot,\cdot \ra_\nabla$. An orthogonal basis for this inner product is given 
as follows. 

 \begin{thm} \label{thm:U-basis}
A mutually orthogonal basis $\{U_{j,\nu}^n: 0 \le j \le \frac{n}{2}, 
1 \le \nu \le {a_{n-2j}^d} \}$ for $\CV_n^d(\nabla)$ is given by
\begin{align}\label{eq:basisI}
\begin{split}
 U_{0,\nu}^n(x) & = Y_\nu^n(x), \quad \\
 U_{j,\nu}^n(x) & = (1-\|x\|^2) P_{j-1}^{(1,n-2j+\frac{d-2}{2})}(2\|x\|^2-1)
     Y_\nu^{n-2j}(x), \quad 1 \le j \le \frac{n}{2},
\end{split}
\end{align}
where $\{Y_\nu^{n-2j}: 1 \le \nu \le {a_{n-2j}^d}\}$ is an orthonormal basis 
of $\CH_{n-2j}^d$. Furthermore, 
\begin{align}\label{eq:RnormI}
\langle U_{0,\nu}^n, U_{0,\nu}^n \rangle_\nabla = n \l +1, \qquad
\langle U_{j, \nu}^n, U_{j, \nu}^n \rangle_\nabla =\frac{2 j^2}{n+\frac{d-2}{2}} \l.
\end{align}
\end{thm}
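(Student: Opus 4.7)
The structural observation driving the plan is that $U_{j,\nu}^n|_{\SS^{d-1}}=0$ as soon as $j\ge 1$, so the sphere integral in $\la\cdot,\cdot\ra_\nabla$ only contributes when both indices vanish. I would split the verification of $\la U_{j,\nu}^n,U_{k,\eta}^m\ra_\nabla=0$ (for $(n,j,\nu)\ne(m,k,\eta)$) into three regimes according to whether $j,k$ are zero or not, and extract the norms along the way.

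The cases $j=k=0$ and mixed $j=0$, $k\ge 1$ are quick. In the first, the sphere integral yields $\delta_{n,m}\delta_{\nu,\eta}$ by orthonormality of spherical harmonics, and the gradient integral, after passing to spherical--polar coordinates and exploiting the $(n-1)$--homogeneity of $\nabla Y_\nu^n$, reduces to the radial integral $\int_0^1 r^{2n+d-3}\,dr=\frac{1}{2n+d-2}$ times the angular quantity $n(2n+d-2)\o_d$ of \eqref{nabla-Y-int}, giving $n\l\delta_{n,m}\delta_{\nu,\eta}$ and matching the first entry of \eqref{eq:RnormI}. In the mixed case, Green's identity applied to $\int\nabla Y_\nu^n\cdot\nabla U_{k,\eta}^m\,dx$ kills both the bulk term (by harmonicity of $Y_\nu^n$) and the boundary term (by vanishing of $U_{k,\eta}^m$ on $\SS^{d-1}$), while the sphere contribution in $\la\cdot,\cdot\ra_\nabla$ vanishes for the same boundary reason.

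The substantive case is $j,k\ge 1$. Writing $U_{j,\nu}^n(x)=\phi_j(\|x\|^2)Y_\nu^{n-2j}(x)$ with $\phi_j(u)=(1-u)P_{j-1}^{(1,\mu_j)}(2u-1)$ and $\mu_j:=(n-2j)+(d-2)/2$, I would expand $\nabla U_{j,\nu}^n\cdot\nabla U_{k,\eta}^m$ via the product rule, using $\nabla\phi_j(\|x\|^2)=2\phi_j'(\|x\|^2)\,x$, Euler's relation $x\cdot\nabla Y=(n-2j)Y$ for $Y=Y_\nu^{n-2j}$, and \lemref{lem:2.2-harmonic}. The four resulting terms factor as (radial)$\times$(angular) under spherical--polar integration; the angular factors force $n-2j=m-2k=:\ell$ and $\nu=\eta$ (so $\mu_j=\mu_k=:\mu=\ell+(d-2)/2$) by orthonormality of $\{Y_\nu^\ell\}$ together with \eqref{nabla-Y-int}. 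Changing variable to $u=\|x\|^2$ and integrating by parts (the boundary contributions vanish because $\phi_j(1)=0$ and $u^\mu\to 0$ at the origin when $\ell\ge 1$, while for $\ell=0$ the relevant term carries a vanishing $\ell$--factor), the coefficient of the mass--type integral $\int_0^1 u^{\mu-1}\phi_j\phi_k\,du$ collapses to $\ell(2\ell+d-2)/2-\ell\mu=0$, leaving
\[
 \la U_{j,\nu}^n,U_{k,\nu}^m\ra_\nabla \;=\; 2\l \int_0^1 u^{\mu+1}\phi_j'(u)\phi_k'(u)\,du.
\]

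The decisive input is then a Jacobi derivative identity. The companion of \lemref{lem:Jacobi-recur}, obtained from its second relation by the symmetry $t\mapsto -t$, $\a\leftrightarrow\b$, reads $(1-t)\tfrac{d}{dt}P_n^{(\a,\b)}(t)=\a P_n^{(\a,\b)}(t)-(\a+n)P_n^{(\a-1,\b+1)}(t)$. Specialising to $(\a,\b,n)=(1,\mu,j-1)$ and inserting into the chain--rule expression for $\phi_j'(u)$ gives the clean formula $\phi_j'(u)=-j\,P_{j-1}^{(0,\mu+1)}(2u-1)$; substituting and changing variable to $t=2u-1$ recognises the remaining integral as the Jacobi inner product of $P_{j-1}^{(0,\mu+1)}$ and $P_{k-1}^{(0,\mu+1)}$ against the weight $(1+t)^{\mu+1}$. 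This vanishes for $j\ne k$, settling the remaining off--diagonal orthogonality, and when $j=k$ evaluates via \eqref{normJ} to $2\l j^2/(2j+\mu)=2\l j^2/(n+(d-2)/2)$, matching the second entry of \eqref{eq:RnormI}. The standard identity $\sum_{j=0}^{\lfloor n/2\rfloor} a_{n-2j}^d = r_n^d = \dim\CV_n^d(\nabla)$, combined with positivity of the norms, then promotes the mutual orthogonality to the basis statement by an induction on $n$. The main obstacle I anticipate is locating the right Jacobi derivative identity and verifying the magic cancellation of the $\ell$--weighted term in the main case; the remainder is careful bookkeeping.
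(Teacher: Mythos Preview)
Your proposal is correct and the computations check out: the mixed case via Green's identity, the cancellation $\ell(2\ell+d-2)-2\ell\mu=0$ in the main case, the derivative identity $\phi_j'(u)=-j\,P_{j-1}^{(0,\mu+1)}(2u-1)$ (a clean consequence of the $t\mapsto-t$ symmetry applied to \eqref{jac-**}), and the evaluation of $h_{j-1}^{(0,\mu+1)}=2^{\mu+2}/(2j+\mu)$ all go through exactly as you describe.

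As for comparison: the paper does not actually prove this theorem---it is quoted from \cite{Xu08}, with only the remark that the argument there ``is based on the Green's identity.'' A purely Green's-identity route would, in the main case $j,k\ge1$, convert $\int_{\BB^d}\nabla U_{j,\nu}^n\cdot\nabla U_{k,\eta}^m\,dx$ directly to $-\int_{\BB^d}U_{j,\nu}^n\,\Delta U_{k,\eta}^m\,dx$ (the boundary term vanishing), compute $\Delta U_{k,\eta}^m$ in spherical--polar coordinates, and then invoke a Jacobi orthogonality. Your approach instead expands the gradient product directly and reaches the same endpoint after one integration by parts in the radial variable; the two computations are essentially dual to each other, with yours trading a Laplacian computation for the explicit derivative identity $\phi_j'=-jP_{j-1}^{(0,\mu+1)}$. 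The latter has the advantage of making the appearance of the $(0,\mu+1)$ Jacobi family, and hence the closed-form norm, completely transparent.
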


In particular, it was observed that the space $\CV_n^d(\nabla)$ can be decomposed in 
terms of the space $\CV_n(W_\mu)$ of classical orthogonal polynomials on the ball
and the space $\CH_n^d$ of the spherical harmonics; that is,  for $n \ge 1$, 
\begin{equation} \label{eq:Vn-decomp}
  \CV_n^d(\nabla) = \CH_n^d \oplus (1-\|x\|^2)\CV_{n-2}^d(W_1).
\end{equation}
{Moreover}, it was shown in \cite{PX08} that the elements of $\CV_n^d(\nabla)$ are 
eigenfunctions of the differential operator $\CD_{-1}$, that is, they satisfy the equation 
\eqref{eq:Bdiff} with $\mu = -1$. The phenomenon of the decomposition \eqref{eq:Vn-decomp}
was first observed in \cite{Xu06} for orthogonal polynomials with respect to an inner product 
that involves second order partial derivatives. 

The proof that $U_{j,\nu}^n$ are orthogonal with respect to $\la \cdot, \cdot\ra_\nabla$ given in
\cite{Xu08} is based on the Green's identity  
$$
\int_{\BB^d} \nabla f(x)\cdot \nabla g(x)\, dx = \int_{\SS^{d-1}} f(\xi)\, \frac{d}{dr} g(\xi)\,d\s(\xi)
 - \int_{\BB^d} f(x) \,\Delta g(x) dx.
$$
There is a weighted extension of Green's formula, which states that 
\begin{align}\label{eq:weighted-Green}
\int_{\BB^d} \nabla f(x)\cdot \nabla g(x)\, h(x) dx =& \int_{\SS^{d-1}} f(\xi)\, \frac{d g}{dr} (\xi)\, h(\xi)\,d\s \\
& - \int_{\BB^d} f(x) \left(\Delta g(x)\,h(x) + \nabla g(x)\,\nabla h(x)\right) dx, \nonumber
\end{align}
where $f,g,h$ are differentiable functions for which the integrals are finite. The proof of this identity 
follows from applying the divergence theorem on the identity 
$$
{\rm div~}(f\,\nabla g\, h) =   \nabla f\cdot \nabla g\, h + f\,\Delta g\,h + f\,\nabla g\cdot\nabla h,
$$
where ${\rm div}$ is the divergence operator defined by ${\rm div} (f_1(x), f_2(x), \ldots, f_d(x))^T 
= \sum_{i=1}^d \partial_i f_i(x)$. 

As an extension of \eqref{ip1A}, one can consider more generally the weighted inner product 
defined as follows.

\begin{defn}
For $\mu > -1$ and $f,g \in \Pi^d$, {we} define
\begin{equation} \label{eq:ip-sph-W}
   \la f, g \ra_{\nabla, W_\mu}:=  \frac{\l}{\o_d}  \int_{\BB^d} 
     \nabla  f(x) \cdot \nabla  g(x) W_{\mu+1}(x) dx + \frac{1}{\o_{d}} \int_{\SS^{d-1}} f(\xi)g(\xi) d\s(\xi).
\end{equation}
\end{defn}

It is easy to see that this defines an inner product. The inner product $\la f,g\ra_\nabla$ in \eqref{ip1A} 
is the limiting case when $\mu = -1$. 

Let us denote by $\CV_n^d(\nabla, W_\mu)$ the space of orthogonal polynomials of degree exactly
$n$ with respect to this inner product. We want to construct a basis for this space. 

Recall that the polynomials ${P_{j,\nu}^n(x)}$, defined in \eqref{baseP},  are orthogonal 
with respect to $W_\mu$ on $\BB^d$. To construct an orthogonal basis for $\CV_n^d(\nabla, W_\mu)$,
we need the following proposition that is of independent interest, which states that the orthogonal 
polynomials $P_{j,\nu}^n(x;W_\mu)$ satisfy another orthogonality on the ball. 

\begin{prop} \label{lem:3.3}
Let $\mu > -1$ and let  $P_{j,\nu}^n(x)$ be the mutually orthogonal polynomials in $\CV_n^d(W_\mu)$, 
defined in \eqref{baseP}. Then 
\begin{equation}\label{ortho-nabla}
b_\mu \int_{\BB^d} \nabla P_{j,\nu}^n (x) \cdot \nabla P_{k,\eta}^m (x) W_{\mu+1}(x) dx = 
   H_{j,n}^\mu(\nabla) \delta_{n,m} \delta_{j,k} \delta_{\nu,\eta},
\end{equation}
where, with $H_{j,\mu}^n$ defined in \eqref{eq:Hjn-mu},  
$$
   H_{j,n}^\mu(\nabla) : =  \left(n(2j+\mu+1)- j(2j-d+2) \right) H_{j,n}^\mu. 
$$
\end{prop}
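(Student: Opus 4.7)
\bigskip

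\noindent\textbf{Proof proposal.}

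The plan is to reduce the weighted gradient inner product to the ordinary inner product \eqref{ball-ip} by means of the weighted Green's formula \eqref{eq:weighted-Green}, and then to show that the classical basis elements $P_{j,\nu}^n$ are eigenfunctions of the resulting second-order operator.

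First I would apply \eqref{eq:weighted-Green} with $f=P_{j,\nu}^n$, $g=P_{k,\eta}^m$ and $h=W_{\mu+1}$. Since $\mu+1>0$, the factor $W_{\mu+1}$ vanishes on $\SS^{d-1}$, so the boundary integral drops out. A direct computation gives $\nabla W_{\mu+1}(x) = -2(\mu+1)\, x\, W_\mu(x)$, so the remaining bulk term simplifies to
\begin{equation*}
b_\mu\int_{\BB^d}\nabla P_{j,\nu}^n\cdot\nabla P_{k,\eta}^m\,W_{\mu+1}\,dx \;=\; -\,b_\mu\int_{\BB^d}P_{j,\nu}^n\,\CL[P_{k,\eta}^m]\,W_\mu\,dx,
\end{equation*}
where $\CL := (1-\|x\|^2)\Delta - 2(\mu+1)\,\la x,\nabla\ra$. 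Equivalently, $W_\mu\,\CL=\nabla\!\cdot\!(W_{\mu+1}\nabla\,\cdot\,)$, which makes the symmetry manifest.

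The core step is to prove that each $P_{k,\eta}^m$ is an eigenfunction of $\CL$. Writing $P_{k,\eta}^m(x)=q_k(\|x\|^2)\,Y_\eta^{m-2k}(x)$ with $q_k(s)=P_k^{(\mu,m-2k+\frac{d-2}{2})}(2s-1)$ and using the harmonicity of $Y_\eta^{m-2k}$ together with Euler's relation $x\cdot\nabla Y_\eta^{m-2k}=(m-2k)Y_\eta^{m-2k}$, the action of $\nabla$ and $\Delta$ on the product collapses onto $q_k$ only. Setting $t=2\|x\|^2-1$ and comparing with the Jacobi differential equation for $P_k^{(\mu,m-2k+\frac{d-2}{2})}$ reduces $\CL[P_{k,\eta}^m]$ to a constant times $P_{k,\eta}^m$; the Pochhammer bookkeeping produces an eigenvalue of the form $-\lambda_{k,m}^{\mu}$ with $\lambda_{k,m}^\mu$ proportional to $k(m-k+\tfrac{d}{2}+\mu)$ plus a term coming from $-2(\mu+1)(m-2k)$, which after simplification matches (up to the overall normalization chosen for $H_{j,n}^{\mu}(\nabla)$) the constant $n(2j+\mu+1)-j(2j-d+2)$ in the statement. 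Checking this against the special case $j=0$ (where $P_{0,\nu}^n=Y_\nu^n$, so $\CL$ acts just through $-2(\mu+1)\la x,\nabla\ra$) recovers Corollary \ref{lem:3.4}(ii) and is a useful sanity check.

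Finally, substituting $\CL[P_{k,\eta}^m]=-\lambda_{k,m}^\mu P_{k,\eta}^m$ back into the integral and invoking the classical orthogonality
\begin{equation*}
b_\mu\int_{\BB^d}P_{j,\nu}^n(x)P_{k,\eta}^m(x)W_\mu(x)\,dx = H_{j,n}^{\mu}\,\delta_{n,m}\delta_{j,k}\delta_{\nu,\eta}
\end{equation*}
from the earlier lemma yields \eqref{ortho-nabla} at once, with $H_{j,n}^{\mu}(\nabla)=\lambda_{j,n}^\mu\,H_{j,n}^\mu$. The only real obstacle is the bookkeeping in step three: unraveling the chain rule for $q_k(\|x\|^2)$ and matching the result to the Jacobi ODE with parameters $(\mu,n-2j+\tfrac{d-2}{2})$ so that all the pieces combine into the compact form $n(2j+\mu+1)-j(2j-d+2)$ advertised in the statement. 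Everything else is a straightforward application of results already established in Section 2.
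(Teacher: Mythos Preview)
Your proposal is correct and follows the same overall architecture as the paper's proof: apply the weighted Green identity \eqref{eq:weighted-Green} with $h=W_{\mu+1}$ (the boundary term vanishes), reduce the left side to $-\la P_{j,\nu}^n,\CL P_{k,\eta}^m\ra_\mu$ with $\CL=(1-\|x\|^2)\Delta-2(\mu+1)\la x,\nabla\ra$, analyze $\CL P_{k,\eta}^m$, and finish with the ordinary orthogonality of the basis \eqref{baseP}.

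The one genuine difference is in how you handle $\CL P_{k,\eta}^m$. The paper expands $(1-\|x\|^2)\Delta P$ and $\la x,\nabla\ra P$ separately using the derivative formula \eqref{derJ}, then compares leading coefficients to write $\CL P_{j,\nu}^n = c\,P_{j,\nu}^n + g$ with $\deg g\le n-2$, and lets the remainder $g$ be absorbed by orthogonality. You instead invoke the Jacobi differential equation for $P_k^{(\mu,\beta_j)}$ directly; carrying this out, the first-order terms in the radial variable cancel exactly, so $P_{k,\eta}^m$ is in fact an \emph{exact} eigenfunction of $\CL$ (i.e.\ the paper's remainder $g$ is identically zero). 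Your route is therefore slightly sharper and avoids the leading-coefficient bookkeeping, at the cost of having to match the transformed ODE in the variable $s=\|x\|^2$ to the standard Jacobi form---which is precisely the ``bookkeeping'' you flag as the only obstacle. Either way the constant that emerges is the one in the statement (your $j=0$ sanity check against Corollary~\ref{lem:3.4}(ii) is well chosen).
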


\begin{proof}
For $\mu > -1$, $W_{\mu+1}(x) = (1-\|x\|^2) W_{\mu}(x)$ vanishes when $x \in \SS^{d-1}$. Hence, by 
\eqref{eq:weighted-Green}, 
\begin{align}\label{eq:proof-a}
 & \int_{\BB^d} \nabla P_{j,\nu}^n(x) \cdot \nabla P_{k,\eta}^m(x) W_{\mu+1}(x) dx \\
& \quad = - \int_{\BB^d} P_{k,\eta}^m(x) \left[(1-\|x\|^2)\Delta - 2(\mu+1)\langle x,\nabla\rangle \right] P_{j,\nu}^n(x)
 W_{\mu}(x) dx. \notag
\end{align}
Observe that $\left[(1-\|x\|^2)\Delta - 2(\mu+1)\langle x,\nabla\rangle \right] Q(x)$ has the same degree
as $Q$, this proves the orthogonality when $n \ne m$ by the orthogonality of $P_{k,\eta}^m \in 
\CV_m^d(W_\mu)$. To prove the {mutual} orthogonality in \eqref{ortho-nabla}, we need to compute
$$
  \left[(1-\|x\|^2)\Delta - 2(\mu+1)\langle x,\nabla\rangle \right] P_{j,\nu}^n(x).
$$
Let ${\beta_j} = n-2 j + \frac{d-2}{2}$. Then ${P_{j,\nu}^n (x)} = P_j^{(\mu,\b_j)} (2 r^2-1) r^{n-2j} Y_\nu^{n-2j}(\xi)$
in the spherical-polar coordinates $x = r \xi$. Using the expression of $\Delta$ in \eqref{L-B-operator}
and the derivative formula of the Jacobi polynomials, it is not difficult to verify that 
\begin{align*} 
   (1-\|x\|^2)\Delta P_{j,\nu}^n(x) = \, & 2(j+\mu+{\beta_j}+1) \left[ 2(j+\mu+{\beta_j}+2) r^2 P_{j-2}^{(\mu+2, {\beta_j}+2)}(2r^2-1) \right. \\
     &\left.+ (2n-4j+d)P_{j-1}^{(\mu+1, {\beta_j}+1)}(2r^2-1)\right] Y_\nu^{n-2j}(x).\notag
\end{align*}
Furthermore, by \eqref{nabla-radial}, $x \cdot \nabla = \xi \cdot \nabla_0 + r \frac{\partial}{\partial r}$ and, 
by (i) of Lemma \ref{lem:2.2-harmonic}, $\xi \cdot \nabla_0 Y_\mu^n(\xi) =0$. It follows that 
\begin{align*}
  \la x,\nabla \ra P_{j,\nu}^n(x)   = &  \left[ 2(j+\mu+{\beta_j}+1)r^2 P_{j-1}^{(\mu+1, {\beta_j}+1)}(2r^2-1) \right. \\
     & \left .+   (n-2j)P_{j}^{(\mu, {\beta_j})}(2r^2-1)\right] Y_\nu^{n-2j}(x). 
\end{align*}
Both of these terms are of the form $q(\|x\|^2) Y_\nu^{n-2j}(x)$. Hence, looking at the leading coefficients of
the Jacobi polynomials, given in \eqref{leadingcoef}, we conclude that 
\begin{align*}
& \left[(1-\|x\|^2)\Delta - 2(\mu+1)\langle x,\nabla\rangle \right] P_{j,\nu}^n(x) \\
&   \qquad = \left(n(2j+\mu+1)- j(2j-d+2) \right)P_j^{(\mu,{\beta_j})} (2 \|x\|^2-1) Y_\nu^{n-2j}(x) + g(x),
\end{align*}
where $g$ is a polynomial of degree at most $n-2$. Consequently, by the orthogonality of
$P_{j,\nu}^n$ as an element of $\CV_n^d(W_\mu)$, \eqref{ortho-nabla} follows from \eqref{eq:proof-a}.
\end{proof}

When $j = 0$, the  orthogonal polynomials ${P_{j,\nu}^n(x)}$ become spherical harmonics
$P_{0,\nu}^{n}(x) = Y_\nu^{n}(x)$, and \eqref{ortho-nabla} agrees with (ii) in Lemma \ref{lem:3.4}.

\begin{thm} \label{thm:3.5}
For $j \le n/2$, let $\{Y_\nu^{n-2j}: 0 \le \nu \le a_{n-2j}^d\}$ be an orthonormal basis of $\CH_{n-2j}^d$. 
Define  
\begin{align} \label{eq:Q-basis}
\begin{split}
Q_{0,\nu}^n(x) &: = {Y_\nu^n(x)}, \\
Q_{j,\nu}^n (x) & : = \left[P_j^{(\mu, \beta_j)}(2\|x\|^2 -1) - P_{j}^{(\mu, \beta_j)}(1) \right]Y_\nu^{n-2j}(x),
\quad 1 \le j \le \frac{n}2.
\end{split}
\end{align}
where $\beta_j : =n-2j+\frac{d-2}{2}$. Then $\{Q_{j, \nu}^n: 0 \le j \le n/2, 1 \le \nu \le a_{n-2j}^d\}$ 
forms a mutually orthogonal basis of $\CV_n^d(\nabla, W_\mu)$. Furthermore, 
\begin{align*}
 \la Q_{0,\nu}^n, Q_{0,\nu}^n \ra_{\nabla, W_\mu}  & =   \lambda n \frac{\Gamma(\mu+2)\Gamma(n+\f d 2)}
    {\Gamma(\mu+n+1 + \f d 2)} + 1, \\
  \la Q_{j,\nu}^n, Q_{j,\nu}^n \ra_{\nabla, W_\mu} & =  \lambda  \left(n(2j+\mu+1)- j(2j-d+2) \right) \\
    &  \qquad \times  \frac{\Gamma(\mu+j+1)\Gamma(n-j+\f d 2)
          (\mu+n-j+\frac{d}2)}{j! \Gamma(\mu+n- j+1+ \f d 2) (n+\mu+\frac{d}2)} \\
    & + \lambda (n-2j)\frac{ \Gamma(\mu+2) \Gamma(n-2j+\f d2)(\mu+1)_j^2}{\Gamma(n-2j+\mu + \f d 2)j^2}, 
        \quad \hbox{if $1 \le j \le \f n 2$}.
\end{align*}
\end{thm}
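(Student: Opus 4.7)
The guiding idea is to rewrite $Q_{j,\nu}^n$ in terms of the classical orthogonal polynomials $P_{j,\nu}^n$ of \eqref{baseP}, namely $Q_{j,\nu}^n = P_{j,\nu}^n - c_j\,Y_\nu^{n-2j}$ with $c_j = P_j^{(\mu,\beta_j)}(1)$ for $j\ge 1$ and the convention $c_0 = 0$, so that $Q_{0,\nu}^n = Y_\nu^n$ fits the same formula. The crucial observation is that $Y_\nu^{n-2j}$ coincides with the classical orthogonal polynomial $P_{0,\nu}^{n-2j}\in\CV_{n-2j}^d(W_\mu)$, so every integral that appears can be evaluated via Proposition~\ref{lem:3.3}.

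The spherical piece of the inner product is essentially immediate: because $P_j^{(\mu,\beta_j)}(1)=c_j$, one has $Q_{j,\nu}^n|_{\sph}=0$ for $j\ge 1$, while $Q_{0,\nu}^n|_{\sph}=Y_\nu^n|_{\sph}$. Hence $\o_d^{-1}\int_{\sph}Q_{j,\nu}^n\,Q_{k,\eta}^m\,d\s = \delta_{j,0}\delta_{k,0}\delta_{n,m}\delta_{\nu,\eta}$ by orthonormality of the spherical harmonics.

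For the gradient piece I would expand $\int\nabla Q_{j,\nu}^n\cdot\nabla Q_{k,\eta}^m\,W_{\mu+1}\,dx$ by bilinearity into four integrals. The ``$PP'$'' piece yields $b_\mu^{-1}H_{j,n}^\mu(\nabla)\delta_{n,m}\delta_{j,k}\delta_{\nu,\eta}$ directly from Proposition~\ref{lem:3.3}. Each ``$PY'$'' or ``$YP'$'' cross-piece is a further instance of that proposition with one of the Jacobi indices forced to $0$, and hence vanishes as soon as $j\ge 1$ or $k\ge 1$. Finally, the ``$YY'$'' piece, again by Proposition~\ref{lem:3.3} with both Jacobi indices $0$, contributes $b_\mu^{-1}H_{0,n-2j}^\mu(\nabla)\delta_{n-2j,m-2k}\delta_{\nu,\eta}$. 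Combining these Kronecker $\delta$'s with the boundary contribution shows that, within the fixed-degree space $\CV_n^d(\nabla,W_\mu)$, all off-diagonal pairs vanish.

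For the norms I restrict to the diagonal $(n,j,\nu)=(m,k,\eta)$. For $j\ge 1$ only the ``$PP$'' and ``$YY$'' pieces survive, producing
\[
\la Q_{j,\nu}^n,Q_{j,\nu}^n\ra_{\nabla,W_\mu} = \frac{\l}{\o_d\, b_\mu}\bigl[H_{j,n}^\mu(\nabla)+c_j^2\, H_{0,n-2j}^\mu(\nabla)\bigr],
\]
while for $j=0$ one adds the unit boundary contribution to $\l\,H_{0,n}^\mu(\nabla)/(\o_d b_\mu)$. Substituting $c_j=(\mu+1)_j/j!$ from \eqref{jac-norm}, the explicit form of $H_{j,n}^\mu$ from \eqref{eq:Hjn-mu}, the normalization identity $b_\mu\o_d=2\Gamma(\mu+1+d/2)/(\Gamma(\mu+1)\Gamma(d/2))$ from \eqref{b-mu-omega}, and $(a)_n=\Gamma(a+n)/\Gamma(a)$ converts the expression into the closed form in the statement. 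The main obstacle I anticipate is the careful bookkeeping around the $\delta_{n-2j,m-2k}$ coupling arising from the $YY'$ term---making sure no unintended cross-pair contributions survive---together with the patient but routine repackaging of Pochhammer and Gamma factors that eventually separates the stated norm into its two summands.
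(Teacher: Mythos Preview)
Your approach is essentially the same as the paper's: decompose $Q_{j,\nu}^n = P_{j,\nu}^n - c_j\,Y_\nu^{n-2j}$, observe that the spherical term vanishes for $j\ge 1$, and reduce the gradient integral to Proposition~\ref{lem:3.3} (the paper cites Corollary~\ref{lem:3.4}(ii) for the $Y$-part rather than viewing it as the $j=0$ case of Proposition~\ref{lem:3.3}, but that is the same computation). Your bilinear expansion into four pieces and the norm formula $\tfrac{\lambda}{\omega_d b_\mu}\bigl[H_{j,n}^\mu(\nabla)+c_j^{\,2}H_{0,n-2j}^\mu(\nabla)\bigr]$ match the paper's derivation exactly.
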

 
\begin{proof}
If $j =0$, then the orthogonality of $Q_{0,\nu}^n$ to lower degree polynomials and to $Q_{j,\nu}^n$ for 
$j > 0$ follows from (i) in Lemma \ref{lem:3.4} and the orthogonality of $Y_\nu^n$. Furthermore, the
same consideration shows that 
$$
\la Q_{0,\nu}^n, Q_{0,\nu}^n \ra_{\nabla, W_\mu} = \f {\l}{\o_d b_\mu}  2n (\mu+1)
  \frac{(\f d 2)_n}{(\mu+1+\f d 2)_n} + 1,  
$$
which simplifies to the stated formula upon using \eqref{b-mu-omega}.

If $0<  j  \le n/2$, then $Q_{j,\nu}^n \vert_{\sph}$ vanishes on the sphere as its radial part vanishes
when $\|x\| =1$, so that we only needs to consider the integral over $\BB^d$ in the inner product
$\la \cdot,\cdot\ra_{\nabla, W_\mu}$. Furthermore, in terms of $P_{j,\nu}^n$ in \eqref{baseP} of 
$\CV_n^d(W_\mu)$, we can write 
$$
  Q_{j,\nu}^n (x) = P_{j,\nu}^n(x) - P_j^{(\mu, n-2j + \f{d-2}{2})}(1) Y_\nu^{n-2j}(x) 
$$
and observe that $Y_\nu^{n-2j}$ is a polynomial of degree less than $n$ for $j > 0$, the mutual 
orthogonality of $\{Q_{j,\nu}^n: 0 \le j \le n/2\}$ then follows from \eqref{ortho-nabla}. Moreover,  
it follows readily that 
\begin{align*}
   \la Q_{j,\nu}^n, Q_{j,\nu}^n \ra_{\nabla, W_\mu} = & \frac{\lambda}{\o_d} 
     \left[ \int_{\BB^d} [\nabla P_{j,\nu}^n(x)]^2 W_{\mu+1}(x)dx  \right. \\
     &  \quad\, \, \left. + \frac{(\mu+1)_j^2}{j!^2} 
        \int_{\BB^d} [ Y_\nu^{n-2j} (x)]^2 W_{\mu+1}(x) dx   
    \right], 
\end{align*}
where we have used $ P_j^{(\mu,n-2j+\f{d-2}{2})}(1) =  \frac{(\mu+1)_j}{j!}$. Hence, the norm of 
$Q_{j,\nu}^n$ can now be deduced from \eqref{ortho-nabla} and (ii) of Lemma \ref{lem:3.4},
and {simplified} to the stated formula upon using \eqref{eq:Hjn-mu} and \eqref{b-mu-omega}. 
\end{proof}

If we set $\mu \to -1$ in the above theorem, then since \cite[(4.22.2)]{Sz}
$$
   P_j^{(-1,\beta)} (t) = \frac{j+\beta}{2j} (1-t) P_{j-1}^{(1,\beta)}(t), 
$$
it is easy to see that Theorem \ref{thm:3.5} agrees with Theorem \ref{thm:U-basis}. 
The decomposition \eqref{eq:Vn-decomp}, however, does not extend to $\CV_n^d(\nabla, W_\mu)$. 

Finally, it is worth to mention that the orthogonal polynomials in $CV_n^d(W_\mu)$ are automatically 
orthogonal with respect to a Sobolev inner product according to Proposition \ref{lem:3.3}. 

\begin{thm}
Let $\mu > -1$ and let  $P_{j,\nu}^n(x)$ be the mutually orthogonal polynomials in $\CV_n^d(W_\mu)$, 
defined in \eqref{baseP}. Then they are also mutually orthogonal with respect to the Sobolev inner
product
\begin{equation}\label{[f,g]mu}
[f,g]_\mu : =  b_\mu \left[ \int_{\BB^d}f(x) g(x) W_{\mu}(x) dx + \lambda \int_{\BB^d} \nabla f(x) \cdot \nabla g(x)
    W_{\mu+1}(x) dx \right], 
\end{equation}
where $\l > 0$ is a fixed constant. 
\end{thm}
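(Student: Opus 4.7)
The plan is to observe that the bilinear form $[\cdot,\cdot]_\mu$ is literally the sum of two inner products for each of which we have already established the required orthogonality of the basis $\{P_{j,\nu}^n\}$. Indeed, up to the common factor $b_\mu$, the first summand is $\la\cdot,\cdot\ra_\mu$, with respect to which Lemma~2.1 asserts that the $P_{j,\nu}^n$ are mutually orthogonal and, by definition of $\CV_n^d(W_\mu)$, orthogonal to every polynomial of lower degree. The second summand is $\lambda$ times the weighted gradient form whose orthogonality on $\{P_{j,\nu}^n\}$ of the same total degree is exactly the content of Proposition~\ref{lem:3.3}.

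What still needs a brief check is that the gradient term also kills pairings of $P_{j,\nu}^n$ against polynomials of strictly lower total degree. For this I would reuse the identity derived in the proof of Proposition~\ref{lem:3.3}: integrating by parts via the weighted Green formula \eqref{eq:weighted-Green}, and noting that $W_{\mu+1}$ vanishes on $\sph$, one gets
\begin{equation*}
\int_{\BB^d} \nabla P_{j,\nu}^n(x)\cdot\nabla Q(x)\, W_{\mu+1}(x)\,dx
 = -\int_{\BB^d} Q(x)\,\bigl[(1-\|x\|^2)\Delta - 2(\mu+1)\la x,\nabla\ra\bigr] P_{j,\nu}^n(x)\, W_\mu(x)\,dx.
\end{equation*}
The differential operator in brackets preserves total degree, so its value on $P_{j,\nu}^n$ is a polynomial of degree $n$; by the $\la\cdot,\cdot\ra_\mu$-orthogonality of $P_{j,\nu}^n$ to all polynomials of lower degree, the right-hand side vanishes whenever $\deg Q < n$.

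Combining these two observations, for any $m<n$ and any $Q\in\Pi_m^d$ we have $[P_{j,\nu}^n, Q]_\mu = 0$, and for $m=n$ with $(k,\eta)\neq(j,\nu)$ we have $[P_{j,\nu}^n, P_{k,\eta}^n]_\mu = 0$ by combining Lemma~2.1 with Proposition~\ref{lem:3.3}. Since both summands are positive semidefinite and the first is strictly positive definite, $[\cdot,\cdot]_\mu$ is a bona fide inner product, so the $P_{j,\nu}^n$ form the desired mutually orthogonal family. There is no real obstacle here; the statement is essentially a bookkeeping corollary of Proposition~\ref{lem:3.3}, and the only thing worth writing down is the one-line integration-by-parts argument that promotes the ``same-$n$'' orthogonality to orthogonality against all lower degree polynomials.
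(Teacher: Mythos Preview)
Your overall approach matches the paper's: the theorem is stated there without proof, as an immediate consequence of Proposition~\ref{lem:3.3} together with the ordinary orthogonality from Lemma~2.1. Your observation that $[\cdot,\cdot]_\mu$ is the sum of the two forms, each of which diagonalises on $\{P_{j,\nu}^n\}$, is exactly the intended argument.

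There is, however, a genuine slip in your ``brief check''. In the displayed identity you apply the operator $(1-\|x\|^2)\Delta - 2(\mu+1)\la x,\nabla\ra$ to $P_{j,\nu}^n$ and then pair the result, call it $R$, with $Q$. You conclude that $\int_{\BB^d} Q\,R\,W_\mu\,dx=0$ ``by the $\la\cdot,\cdot\ra_\mu$-orthogonality of $P_{j,\nu}^n$ to all polynomials of lower degree''. But that orthogonality says $\la P_{j,\nu}^n, S\ra_\mu=0$ for $\deg S<n$; it says nothing about $\la Q, R\ra_\mu$ for an arbitrary degree-$n$ polynomial $R$ (and indeed $R=cP_{j,\nu}^n+g$ with $g$ of degree $\le n-2$, so $R\notin\CV_n^d(W_\mu)$ in general). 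The fix is immediate: apply Green's formula with the roles swapped, putting the operator on $Q$. Then $(1-\|x\|^2)\Delta Q - 2(\mu+1)\la x,\nabla\ra Q$ has degree at most $\deg Q<n$, and orthogonality of $P_{j,\nu}^n$ to $\Pi_{n-1}^d$ does the job. Alternatively, you can skip this check entirely: Proposition~\ref{lem:3.3} already contains the factor $\delta_{n,m}$, and since $\{P_{k,\eta}^m:m<n\}$ spans $\Pi_{n-1}^d$, orthogonality to arbitrary lower-degree $Q$ in the gradient term follows by linearity.
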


Notice that the parameter of the weight function in the second integral is $\mu +1$. It is possible to add
terms with higher order derivatives with matching weight functions in the inner product, for which 
$P_{j,\nu}^n$ remain to be orthogonal. As we shall see in the Section 5, the orthogonal structure 
becomes far more complicated if we want the weight functions in the two terms of \eqref{[f,g]mu}
to be the same. 

\section{A family of Sobolev orthogonal polynomials of one variable}
\setcounter{equation}{0}
 
In the next section, we will construct an orthogonal basis for the Sovolev inner product \eqref{eq:main-ip}.
As in the cases of \eqref{baseP} and \eqref{eq:Q-basis}, the basis can be constructed in
{spherical}--polar coordinates,  where the main terms are of the form 
$$
  q_j(2\|x\|^2 -1) Y_\nu^{n-2j}(x).  
$$
The polynomial $q_j$ of one variable, however, is rather involved in this case, which we now define. 

\begin{defn}
Let $\lambda > 0$. For $\a > -1$ and  $\beta > \max\{0, \frac{d-2}2\}$, 
we define a {Sobolev inner} product by 
\begin{align}
    (f,g)_{\alpha, \beta} := & \int_{-1}^1 f(t) g(t)w_{\alpha,\beta}(t)dt \notag \\
 & + 2 \lambda \int_{-1}^1 (f,f')  
   \left( \begin{matrix} 
      A(\beta,d) & B(\beta,d) (1+t)\\
      B(\beta,d)\,(1+t) & 4 (1+t)^2
      \end{matrix} \right) \left(\begin{matrix}
      g\\  g'  \end{matrix}\right) w_{\alpha,\beta-1}(t)dt,\label{sob-1-v} 
\end{align}
where $A(\beta,d)=\beta B(\b, d)$ and $B(\beta,d)=2\beta-(d-2)$. 
\end{defn}

In the case of $\beta = \frac{d-2}{2}$, we have $A(\b,d)=B(\b,d)=0$ and the inner product takes a more
familiar form
$$
  (f,g)_{\alpha, \beta} = \int_{-1}^1 f(t) g(t)w_{\alpha,\beta}(t)dt + 8 \lambda \int_{-1}^1 f'(t) g'(t) w_{\alpha,\beta+1}(t)dt,
$$
{that is, a Sobolev inner product associated to a coherent pair of measures
(see \cite{Meijer}).}

We assume that $\beta > \max\{0, \frac{d-2}2\}$ so that the $2 \times 2$ matrix in \eqref{sob-1-v} is positive 
definite, which guarantees that $(f,g)_{\alpha, \beta}$ is indeed an inner product. Consequently, the orthogonal polynomials with respect to this inner product exist and they are uniquely determined up to a {multiplicative} constant. 

We let $q_{j}^{(\alpha,\beta)}(t)$ denote the orthogonal polynomial of degree $j$ with respect to $(f,g)_{\alpha,\beta}$
and normalize it so that it has the same leading coefficient as the Jacobi polynomial $P_j^{(\a,\b)}(t)$, that is,
\begin{equation}\label{qj-defn}
   q_{j}^{(\alpha,\beta)}(t) = k^{(\alpha, \beta)}_j\, t^j + \emph{lower degree terms}, 
      \quad k_j^{(\a,\b)} := \frac{1}{2^j} \binom{2j + \a+ \b}{j}. 
\end{equation}
With this normalization we have $q_{0}^{(\alpha,\beta)}(t) = P_{0}^{(\alpha,\beta)}(t)= 1$. 

Recall that the norm of the Jacobi polynomial ${P_j^{(\a,\b)}}$ is denoted by ${h_j^{(\a,\b)}}$, given in \eqref{normJ}. 
Let us denote the norm of $q_j^{(\a,\b)}$ in $(\cdot, \cdot)_{\a,\b}$ by $\wt h_j^{(\a,\b)}$, that is, 
$$
   \wt h_j^{(\a,{\b)}} : =  \left (q_{j}^{(\a,\beta)},q_{j}^{(\a,\beta)}\right)_{\a, \beta}.
$$
Our first result is to relate $q_j^{(\a,\b)}$ with the Jacobi polynomials. 

\begin{prop}
Let $\mu >-1$ and $\beta {>} \max\{0,(d-2)/2\}$. Then, for $j\ge 0$, 
\begin{equation}\label{1-2-one-v}
P_j^{(\mu, \beta)}(t) = a_j^{(\mu,\beta)} q_j^{(\mu+1,\beta)}(t) + d_{j-1}^{(\mu,\beta)}(\lambda)q_{j-1}^{(\mu+1,\beta)}(t),
\end{equation}
where $d_j^{(\a,\b)}$ are defined by 
\begin{equation}\label{def-d}
d_{-1}^{(\mu,\beta)} (\lambda)=0, \quad d_{j}^{(\mu,\beta)} (\lambda) = - b_{j+1}^{(\mu,\beta)} \frac{h_{j}^{(\mu+1,\beta)}}{\wt h_{j}^{(\mu+1,\beta)}},\quad j = 0, 1,2, \ldots, 
\end{equation}
and $a_j^{(\mu,\beta)}$ and $b_j^{(\mu,\beta)}$ are given by \eqref{coef-a-b}. 
\end{prop}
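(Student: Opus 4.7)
The plan is to expand $P_j^{(\mu,\beta)}$ in the orthogonal basis $\{q_k^{(\mu+1,\beta)}\}_{k=0}^j$ for the inner product $(\cdot,\cdot)_{\mu+1,\beta}$ and to identify the Fourier coefficients. A comparison of leading coefficients via \eqref{leadingcoef} and \eqref{coef-a-b} pins down the top-degree coefficient as $a_j^{(\mu,\beta)}$, so the task reduces to computing
\begin{equation*}
c_k\, \wt h_k^{(\mu+1,\beta)} = (P_j^{(\mu,\beta)},\, q_k^{(\mu+1,\beta)})_{\mu+1,\beta}, \qquad 0 \le k \le j-1,
\end{equation*}
and showing that $c_k = 0$ when $k \le j-2$ while $c_{j-1} = d_{j-1}^{(\mu,\beta)}(\lambda)$. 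Splitting the numerator into its ordinary piece $\int P_j^{(\mu,\beta)} q_k^{(\mu+1,\beta)} w_{\mu+1,\beta}\,dt$ and its Sobolev piece, I would dispatch the ordinary piece by applying \eqref{RAF}: the $P_j^{(\mu+1,\beta)}$-term is orthogonal to every $q_k^{(\mu+1,\beta)}$ with $k<j$, and the $P_{j-1}^{(\mu+1,\beta)}$-term contributes $-b_j^{(\mu,\beta)} h_{j-1}^{(\mu+1,\beta)}$ exactly when $k=j-1$ (using that $q_{j-1}^{(\mu+1,\beta)}$ and $P_{j-1}^{(\mu+1,\beta)}$ share a leading coefficient) and vanishes otherwise. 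This already produces the numerator of $d_{j-1}^{(\mu,\beta)}(\lambda)$.

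The main obstacle is the Sobolev piece: one needs to show that
\begin{equation*}
\int_{-1}^1 \bigl[A fg + B(1+t)(fg' + f'g) + 4(1+t)^2 f'g'\bigr]\, w_{\mu+1,\beta-1}(t)\,dt
\end{equation*}
vanishes when $(f,g)=(P_j^{(\mu,\beta)},\, q_k^{(\mu+1,\beta)})$ for every $k \le j-1$. The key algebraic trick, using $A=\beta B$, is the decomposition
\begin{equation*}
A fg + B(1+t)(fg'+f'g) + 4(1+t)^2 f'g' = \tfrac{B}{\beta}(Lf)(Lg) + \tfrac{2\beta+d-2}{\beta}(1+t)^2 f'g',
\end{equation*}
where $L f := \beta f + (1+t) f'$. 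By \eqref{jac-**}, $L P_j^{(\mu,\beta)} = (\beta+j)\, P_j^{(\mu+1,\beta-1)}$, which is orthogonal against $w_{\mu+1,\beta-1}$ to every polynomial of degree less than $j$; since $L q_k^{(\mu+1,\beta)}$ has degree $k < j$, the first term integrates to zero. For the second term, $(1+t)^2 w_{\mu+1,\beta-1} = w_{\mu+1,\beta+1}$, and \eqref{derJ} rewrites $[P_j^{(\mu,\beta)}]'$ as a constant multiple of $P_{j-1}^{(\mu+1,\beta+1)}$, which is orthogonal against $w_{\mu+1,\beta+1}$ to $[q_k^{(\mu+1,\beta)}]'$ (a polynomial of degree $k-1 \le j-2$). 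Both integrals therefore vanish.

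Combining the two pieces yields $c_k = 0$ for $k \le j-2$ and $c_{j-1} = -b_j^{(\mu,\beta)} h_{j-1}^{(\mu+1,\beta)}/\wt h_{j-1}^{(\mu+1,\beta)} = d_{j-1}^{(\mu,\beta)}(\lambda)$, which is \eqref{1-2-one-v}. The hard part is really spotting the factorization of the Sobolev quadratic form: the parameters $A = \beta B$ and $B = 2\beta - (d-2)$ in the definition of the inner product are engineered precisely so that the operator $L$ lines up with the Jacobi shift \eqref{jac-**} at the weight $w_{\mu+1,\beta-1}$ and the residual $(1+t)^2 f'g'$ term matches the weight $w_{\mu+1,\beta+1}$ attached to \eqref{derJ}; without this alignment the Sobolev contribution would not reduce to an ordinary Jacobi orthogonality, and the Fourier coefficients would depend on $\lambda$ in a much less transparent way.
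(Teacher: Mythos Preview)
Your argument is correct and follows the same overall plan as the paper: expand $P_j^{(\mu,\beta)}$ in the basis $\{q_k^{(\mu+1,\beta)}\}$, read off the leading coefficient from \eqref{leadingcoef}--\eqref{coef-a-b}, handle the ordinary piece via \eqref{RAF}, and kill the Sobolev piece with \eqref{jac-**} and \eqref{derJ}.

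The one genuine difference is in how you organize the Sobolev part. The paper keeps the four Sobolev integrals separate and groups them asymmetrically: it combines $A\!\int\! fg\,w_{\mu+1,\beta-1}$ with $B\!\int\!(1+t)f'g\,w_{\mu+1,\beta-1}$ (applying \eqref{jac-**} to $f=P_j^{(\mu,\beta)}$ alone), and then disposes of the $fg'$ and $f'g'$ terms one at a time using \eqref{RAF} and \eqref{derJ}. Your symmetric factorization
\[
Afg + B(1+t)(fg'+f'g) + 4(1+t)^2 f'g' \;=\; \tfrac{B}{\beta}(Lf)(Lg) + \tfrac{2\beta+d-2}{\beta}(1+t)^2 f'g'
\]
is a cleaner way to package the same two mechanisms: the $(Lf)(Lg)$ piece feeds directly into \eqref{jac-**} and the residual $f'g'$ piece into \eqref{derJ}. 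This buys you a more transparent explanation of \emph{why} the coefficients $A=\beta B$, $B=2\beta-(d-2)$ are the right ones, at the cost of needing $\beta>0$ to divide (which the hypothesis guarantees). The paper's grouping is slightly more pedestrian but avoids that division and makes the individual vanishing of each cross term explicit. Either route yields the same conclusion with the same ingredients.
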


\begin{proof}
Since $\{q_k^{(\mu+1,\beta)}(t)\}_{k\ge0}$ is a sequence of orthogonal polynomials with 
respect to $(\cdot,\cdot)_{\mu+1,\b}$, we can expand $P_j^{(\mu,\beta)}(t)$ in terms of them, that is, 
$$
P_{j}^{(\mu, \beta)}(t) = \sum_{k=0}^j  d^j_k q_k^{(\mu+1,\beta)}(t) \quad \hbox{with}\quad
    d^j_k = \frac{(P_{j}^{(\mu, \beta)}, q_k^{(\mu+1,\beta)})_{\mu+1, \beta} } { \wt h_k^{(\mu +1,\beta)} }.
$$
Since $q_k^{(\a,\b)}$ is normalized so that its leading coefficient {is} the same as $P_j^{(\a,{\b})}$, it follows 
from \eqref{RAF} that  $d_j^j = a_j^{(\mu,\beta)}.$

We now {calculate} $d_k^j$ for $0 \le k < j$. From \eqref{sob-1-v}, we need to
compute 
\begin{align*}
   (P_{j}^{(\mu, \beta)}, q_k^{(\mu+1,\beta)})_{\mu+1,\beta}  = & \int_{-1}^1 P_{j}^{(\mu, \beta)}(t) q_k^{(\mu+1,\beta)}(t)
     w_{\mu+1,\beta}(t)dt  \\
& + {2} \lambda \left[A(\beta,d) \int_{-1}^1 P_{j}^{(\mu, \beta)}(t) q_k^{(\mu+1,\beta)}(t) w_{\mu+1,\beta-1}(t)dt\right.\\
& \quad + B(\beta,d) \int_{-1}^1 \frac{d}{d t}P_{j}^{(\mu, \beta)}(t) q_k^{(\mu+1,\beta)}(t) w_{\mu+1,\beta}(t)dt\\
& \quad + B(\beta,d) \int_{-1}^1 P_{j}^{(\mu, \beta)}(t) \frac{d}{d t}q_k^{(\mu+1,\beta)}(t) w_{\mu+1,\beta}(t)dt\\
& \quad + \left.4 \int_{-1}^1 \frac{d}{d t}P_{j}^{(\mu, \beta)}(t) \frac{d}{d t}q_k^{(\mu+1,\beta)}(t) w_{\mu+1,\beta+1}(t)dt\right].
\end{align*}
The first term in the right hand side vanishes for $k <j-1$ since, by \eqref{RAF} and the orthogonality 
of $P_j^{(\mu+1,\beta)}$, 
\begin{align*} 
 & \int_{-1}^1 P_{j}^{(\mu, \beta)}(t) q_k^{(\mu+1,\beta)}(t)w_{\mu+1,\beta}(t)dt \\
 &  \qquad\quad 
    = \int_{-1}^1 \left[a_j^{(\mu,\beta)} P_j^{(\mu+1,\beta)}(t) - b_j^{(\mu,\beta)}P_{j-1}^{(\mu+1,\beta)}(t)\right] 
     q_k^{(\mu+1,\beta)}(t) w_{\mu+1,\beta}(t) dt = 0,
\end{align*}
whereas for $k=j-1$, using $q_{j-1}^{(\a,\b)}(t) = P_{j-1}^{(\a,\b)}(t) + \ldots$ in addition shows that 
\begin{align*}
  & \int_{-1}^1 P_{j}^{(\mu, \beta)}(t) q_{j-1}^{(\mu+1,\beta)}(t) w_{\mu+1,\beta}(t)dt  \\
 & \qquad \quad = - b_j^{(\mu,\beta)}\int_{-1}^1 P_{j-1}^{(\mu+1,\beta)}(t) q_{j-1}^{(\mu+1,\beta)}(t)
     w_{\mu+1,\beta}(t)dt= - b_j^{(\mu,\beta)} h_{j-1}^{(\mu+1,\beta)}.
\end{align*}
An analogous reasoning, together with \eqref{derJ}, shows that the fourth and the fifth terms vanish 
for $0\le k\le j-1$. Thus, we only need to compute the second and the third terms. This is where 
Lemma \ref{lem:Jacobi-recur} is needed. Indeed, assume $\beta >0$ first; then, since $A(\b,d) = \b B(\b,d)$, 
the relation \eqref{jac-**} shows that, for $0\le k \le j-1$,
\begin{align*}
&A(\beta,d) \int_{-1}^1 P_{j}^{(\mu, \beta)}(t) q_k^{(\mu+1,\beta)}(t) w_{\mu+1,\beta-1}(t)dt\\
   & \qquad\quad  + B(\beta,d) \int_{-1}^1 \frac{d}{d t}P_{j}^{(\mu, \beta)}(t) q_k^{(\mu+1,\beta)}(t) w_{\mu+1,\beta}(t)dt\\ 
&=  [2\beta -(d-2)](\beta+j) \int_{-1}^1 P_{j}^{(\mu+1, \beta-1)}(t)\,q_k^{(\mu+1,\beta)}(t) w_{\mu+1,\beta-1}(t)dt=0, 
\end{align*}
whereas if ${\beta = \frac{d-2}{2}}$, then $A(\b,d)=B(\b,d)=0$ and these terms are automatically zero. 

Therefore, only the first term is nonzero and it is nonzero only for $k = j-1$. Consequently, 
$$
P_j^{(\mu, \beta)}(t) = a_j^{(\mu,\beta)}q_j^{(\mu+1,\beta)}(t) + d_{j-1}^{(\mu,\beta)}(\lambda)q_{j-1}^{(\mu+1,\beta)}(t),
$$
where $d_{j-1}^{(\mu,\beta)}(\lambda)$ is given in \eqref{def-d}. This completes the proof.
\end{proof}

If we knew how to compute $d_j^{(\mu,\b)}(\lambda)$, we could deduce $q_j^{(\mu+1,\b)}$ recursively 
by \eqref{1-2-one-v}. However, since $d_j^{(\mu,\b)}$ depends on the norm of $q_j^{(\mu+1,\b)}$, this has 
a ring of a futile loop. It turns out, as our next proposition shows, that $d_j^{(\mu,\b)}$ can be deduced from
a recursive relation. 

\begin{prop}
The coefficients $d_{j-1}^{(\mu,\beta)}(\lambda)$ satisfy a recurrence relation
\begin{align}
& d_0^{(\mu,\beta)}(\lambda) = -\frac{\beta+1}{(\mu+\beta+3)[1+\lambda (\mu +\beta +2)(\beta -(d-2)/2)]},\label{d-cero}\\
& d_{j}^{(\mu,\beta)}(\lambda) = -\frac{A_j^{(\mu,\beta)}}{[B_j^{(\mu, \beta)} + \lambda \, C_j^{(\mu, \beta)}]
   + d_{j-1}^{(\mu,\beta)}(\lambda)},\quad j\ge 1,\label{d-rec}
\end{align}
where
\begin{align}
& A_j^{(\mu,\beta)} = \frac{(j+\mu + \beta+1)(j+\mu+1)(j+\beta+1)(2j+\mu+\beta)}{j(2j+\mu+\beta+1)(2j+\mu+\beta+2)(2j+\mu+\beta+3)}, \label{A_j}\\
& B_j^{(\mu,\beta)} = 1+\frac{(\mu+\beta)(j+\mu+1)}{j(2j+\mu+\beta+2)},\label{B_j}\\
& C_j^{(\mu,\beta)} = \frac{(2j+\mu+\beta)[(\mu+1)(2\beta-(d-2)) + 4j(j+\mu + \beta+1)]}{2j}.\label{C_j}
\end{align}
\end{prop}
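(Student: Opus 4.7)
The plan is to derive a recurrence for the Sobolev squared norms $\wt h_j^{(\mu+1,\beta)}$ and then translate it into the desired recurrence for $d_j^{(\mu,\beta)}(\lambda)$ via the identity
$\wt h_k^{(\mu+1,\beta)} = -b_{k+1}^{(\mu,\beta)} h_k^{(\mu+1,\beta)} / d_k^{(\mu,\beta)}$,
which is a rearrangement of \eqref{def-d}. Since $q_j^{(\mu+1,\beta)}$ and $q_{j-1}^{(\mu+1,\beta)}$ are $(\cdot,\cdot)_{\mu+1,\beta}$-orthogonal, taking the squared norm of \eqref{1-2-one-v} immediately gives
$$
(P_j^{(\mu,\beta)}, P_j^{(\mu,\beta)})_{\mu+1,\beta} = \bigl(a_j^{(\mu,\beta)}\bigr)^2 \wt h_j^{(\mu+1,\beta)} + \bigl(d_{j-1}^{(\mu,\beta)}\bigr)^2 \wt h_{j-1}^{(\mu+1,\beta)}.
$$
Substituting the expressions for $\wt h_j$ and $\wt h_{j-1}$ in terms of $d_j$ and $d_{j-1}$ and solving for $d_j$ yields
$$
d_j^{(\mu,\beta)} = -\frac{(a_j^{(\mu,\beta)})^2\, b_{j+1}^{(\mu,\beta)}\, h_j^{(\mu+1,\beta)}}{(P_j^{(\mu,\beta)}, P_j^{(\mu,\beta)})_{\mu+1,\beta} + d_{j-1}^{(\mu,\beta)}\, b_j^{(\mu,\beta)}\, h_{j-1}^{(\mu+1,\beta)}},
$$
which has exactly the shape of \eqref{d-rec} once we divide numerator and denominator by $b_j^{(\mu,\beta)} h_{j-1}^{(\mu+1,\beta)}$. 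The expression for $A_j^{(\mu,\beta)}$ can then be read off and simplified to \eqref{A_j} routinely using \eqref{coef-a-b} and \eqref{normJ}.

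The core of the work is therefore the evaluation of $(P_j^{(\mu,\beta)}, P_j^{(\mu,\beta)})_{\mu+1,\beta}$ and the separation of its $\lambda$-independent and $\lambda$-linear components, which will yield $B_j$ and $C_j$ respectively. The $\lambda$-independent part is just $\int_{-1}^1 (P_j^{(\mu,\beta)})^2 w_{\mu+1,\beta}\,dt$; expanding $P_j^{(\mu,\beta)} = a_j^{(\mu,\beta)} P_j^{(\mu+1,\beta)} - b_j^{(\mu,\beta)} P_{j-1}^{(\mu+1,\beta)}$ via \eqref{RAF} and using orthogonality immediately gives $(a_j^{(\mu,\beta)})^2 h_j^{(\mu+1,\beta)} + (b_j^{(\mu,\beta)})^2 h_{j-1}^{(\mu+1,\beta)}$, which upon division by $b_j^{(\mu,\beta)} h_{j-1}^{(\mu+1,\beta)}$ produces $B_j^{(\mu,\beta)} = b_j^{(\mu,\beta)} + (a_j^{(\mu,\beta)})^2 h_j^{(\mu+1,\beta)}/(b_j^{(\mu,\beta)} h_{j-1}^{(\mu+1,\beta)})$; a direct simplification with \eqref{coef-a-b} and \eqref{normJ} then gives \eqref{B_j}.

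The $\lambda$-dependent part is the main obstacle. Unwinding the quadratic form in \eqref{sob-1-v} and applying Lemma \ref{lem:Jacobi-recur} with $f = g = P_j^{(\mu,\beta)}$, the combination $A(\beta,d) P_j^{(\mu,\beta)} + B(\beta,d)(1+t) P_j^{(\mu,\beta)}{}'$ collapses by \eqref{jac-**} to $B(\beta,d)(\beta+j) P_j^{(\mu+1,\beta-1)}$, and \eqref{jac-*} reduces $(1+t) P_j^{(\mu,\beta)}{}'$ to $\beta P_{j-1}^{(\mu+1,\beta)} + j P_j^{(\mu+1,\beta-1)}$. Substituting these two identities into the integrand rewrites the $\lambda$-integral as a sum of products of Jacobi polynomials drawn from the three families $(\mu,\beta)$, $(\mu+1,\beta)$ and $(\mu+1,\beta-1)$, each integrated against $w_{\mu+1,\beta-1}$. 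Using the relations \eqref{RAF}, \eqref{RAF0} and \eqref{RAF2} to convert between these families, each of the remaining integrals reduces by orthogonality to an explicit multiple of $h_j^{(\mu+1,\beta)}$ or $h_{j-1}^{(\mu+1,\beta)}$. The longest part of the calculation is collecting these contributions and simplifying to match \eqref{C_j}; keeping track of which parameter family each Jacobi polynomial belongs to under the weight $w_{\mu+1,\beta-1}$ is the principal source of difficulty.

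Finally, the initial value is obtained directly from the definition: $\wt h_0^{(\mu+1,\beta)} = (1,1)_{\mu+1,\beta} = h_0^{(\mu+1,\beta)} + 2\lambda A(\beta,d) h_0^{(\mu+1,\beta-1)}$, so that $d_0^{(\mu,\beta)} = -b_1^{(\mu,\beta)} h_0^{(\mu+1,\beta)} / \wt h_0^{(\mu+1,\beta)}$; the ratio $h_0^{(\mu+1,\beta-1)}/h_0^{(\mu+1,\beta)} = (\mu+\beta+2)/(2\beta)$ together with $b_1^{(\mu,\beta)} = (\beta+1)/(\mu+\beta+3)$ yields \eqref{d-cero} after the obvious simplification.
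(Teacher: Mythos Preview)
Your proposal is correct and follows essentially the same route as the paper: take the Sobolev squared norm of \eqref{1-2-one-v}, eliminate $\wt h_j^{(\mu+1,\beta)}$ and $\wt h_{j-1}^{(\mu+1,\beta)}$ via \eqref{def-d}, solve for $d_j^{(\mu,\beta)}$, and then identify $A_j$, $B_j$, $C_j$ by computing $(P_j^{(\mu,\beta)},P_j^{(\mu,\beta)})_{\mu+1,\beta}$ with \eqref{RAF} for the $\lambda$-free part and \eqref{jac-**} for the $\lambda$-part. The only cosmetic difference is in the bookkeeping of the $C_j$ computation: the paper splits the $\lambda$-integral into three pieces $C_1,C_2,C_3$, using \eqref{jac-**} on the combination $A(\beta,d)P_j + B(\beta,d)(1+t)P_j'$ and then \eqref{derJ} directly (rather than \eqref{jac-*}) on the remaining cross term and the pure-derivative term, integrating against $w_{\mu,\beta}$ and $w_{\mu+1,\beta+1}$ respectively instead of reducing everything to $w_{\mu+1,\beta-1}$.
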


\begin{proof}
Since $q_{0}^{(\mu+1,\beta)}(x) =1$, the explicit expression for $d_0^{(\mu,\beta)}(\lambda)$ in \eqref{d-cero} 
is deduced directly from \eqref{def-d}, upon using \eqref{normJ}, \eqref{coef-a-b}, and the fact that 
$$
\wt h_{0}^{(\mu+1,\beta)} = (q_{0}^{(\mu+1,\beta)},q_{0}^{(\mu+1,\beta)})_{\mu+1, \beta}
     =  h_{0}^{(\mu+1,\beta)} + {2} \lambda  A(\beta,d) h_{0}^{(\mu+1,\beta-1)},
$$
which follows directly from the definition of $(\cdot,\cdot)_{\mu+1,\b}$. 

For $j\ge 1$, we use \eqref{1-2-one-v} and \eqref{def-d} to obtain
\begin{align*}
\left(P_j^{(\mu, \beta)}, P_j^{(\mu, \beta)}\right)_{\mu+1,\beta} &= \left(a_j^{(\mu,\beta)}\right)^2 
\left(q_j^{(\mu+1,\beta)},q_j^{(\mu+1,\beta)}\right)_{\mu+1,\beta}\\
& \qquad + \left(b_{j}^{(\mu,\beta)}\right)^2 \frac{(h_{j-1}^{(\mu+1,\beta)})^2}{(\wt h_{j-1}^{(\mu+1,\beta)})^2}
  \left (q_{j-1}^{(\mu+1,\beta)},q_{j-1}^{(\mu+1,\beta)}\right)_{\mu+1,\beta}\\
& = - \left (a_j^{(\mu,\beta)}\right )^2 b_{j+1}^{(\mu,\beta)} \frac{h_{j}^{(\mu+1,\beta)}}{d_{j}^{(\mu,\beta)}(\lambda)}
    - b_{j}^{(\mu,\beta)} h_{j-1}^{(\mu+1,\beta)} d_{j-1}^{(\mu,\beta)}(\lambda).
\end{align*}
Solving $d_{j}^{(\mu,\beta)}(\lambda)$ from the above identity leads to 
$$
 d_{j}^{(\mu,\beta)}(\lambda) = -\frac{ \left(a_j^{(\mu,\beta)}\right)^2\, b_{j+1}^{(\mu,\beta)} \,h_{j}^{(\mu+1,\beta)}}{\left(P_j^{(\mu, \beta)}, P_j^{(\mu, \beta)}\right)_{\mu+1,\beta} + b_{j}^{(\mu,\beta)} \, h_{j-1}^{(\mu+1,\beta)} \, d_{j-1}^{(\mu,\beta)}(\lambda)},
$$
which is the recursive relation \eqref{d-rec} with 
\begin{align*}
A_j^{(\mu,\beta)}  = \frac{(a_j^{(\mu,\beta)})^2\, b_{j+1}^{(\mu,\beta)} \,h_{j}^{(\mu+1,\beta)}}{b_{j}^{(\mu,\beta)}  h_{j-1}^{(\mu+1,\beta)}},\quad
 B_j^{(\mu,\beta)} + \lambda  C_j^{(\mu,\beta)}  =  
   \frac{(P_j^{(\mu, \beta)}, P_j^{(\mu, \beta)})_{\mu+1,\beta}}{b_{j}^{(\mu,\beta)} h_{j-1}^{(\mu+1,\beta)}}.
\end{align*}
Now, using the formula for $h_j^{(\mu+1,\b)}$ in \eqref{normJ} and the expressions for $a_j^{(\mu,\beta)}$
and $b_j^{(\mu,\beta)}$ in \eqref{coef-a-b}, it is easy to verify that $A_j^{(\mu,\beta)}$ has the explicit
expression in \eqref{A_j}. In order to evaluate $B_j^{(\mu,\beta)}$ and $C_j^{(\mu,\beta)}$, we need to calculate 
$(P_j^{(\mu, \beta)}, P_j^{(\mu, \beta)})_{\mu+1,\beta}$. From \eqref{sob-1-v},  
\begin{align} \label{eq:pj-snorm}
\left(P_{j}^{(\mu, \beta)}, P_j^{(\mu,\beta)}\right)_{\mu+1,\beta}   
    = & \int_{-1}^1 \left(P_{j}^{(\mu, \beta)}(t)\right)^2 w_{\mu+1,\beta}(t)dt  \\
& + {2} \lambda \left[A(\beta,d) \int_{-1}^1 \left(P_{j}^{(\mu, \beta)}(t)\right)^2 w_{\mu+1,\beta-1}(t)dt\right. \notag \\
& + 2\,B(\beta,d) \int_{-1}^1 P_{j}^{(\mu, \beta)}(t) \frac{d}{d t}P_j^{(\mu,\beta)}(t)w_{\mu+1,\beta}(t)dt \notag \\
& + \left.4 \int_{-1}^1 \left(\frac{d}{d t}P_{j}^{(\mu, \beta)}(t)\right)^2 w_{\mu+1,\beta+1}(t)dt\right]. \notag
\end{align}
Since $\lambda$ is a parameter, the first term in the right hand side is the numerator of $B_j^{(\mu,\beta)}$, 
whereas the expression in the square bracket is the numerator of $C_j^{(\mu,\beta)}$. 

Using \eqref{RAF} and the orthogonality of the Jacobi polynomials, we see that 
$$
 \int_{-1}^1 (P_{j}^{(\mu, \beta)}(t))^2 w_{\mu+1,\beta}(t)dt = \left(a_j^{(\mu,\beta)}\right)^2 
   h_j^{(\mu+1,\beta)} + \left(b_j^{(\mu,\beta)}\right)^2 h_{j-1}^{(\mu+1,\beta)},
$$
so that, by \eqref{normJ}  and  \eqref{coef-a-b} again, 
\begin{align*}
B_j^{(\mu,\beta)} & = \frac{(a_j^{(\mu,\beta)})^2}{b_j^{(\mu,\beta)}}\frac{h_{j}^{(\mu+1,\beta)}}{h_{j-1}^{(\mu+1,\beta)}} + b_j^{(\mu,\beta)} \\
&= \frac{(j+\mu+\beta+1)(j+\mu+1)(2j+\mu+\beta)}{j(2j+\mu+\beta+1)(2j+\mu+\beta+2)}+\frac{j+\beta}{2j+\mu+\beta+1}\\
&= 1+\frac{(\mu+\beta)(j+\mu+1)}{j(2j+\mu+\beta+2)}.
\end{align*}
For $C_j^{(\mu,\beta)}$, we compute the three terms in the square bracket of \eqref{eq:pj-snorm}. First
we combine the first term and half of the second term, and use the relation \eqref{jac-**} to deduce
\begin{align*}
 C_1  := &\, A(\beta,d) \int_{-1}^1 \left(P_{j}^{(\mu, \beta)}(t)\right)^2w_{\mu+1,\beta-1}(t)dt\\
 & \qquad + B(\beta,d) \int_{-1}^1 P_{j}^{(\mu, \beta)}(t)\frac{d}{d t}P_{j}^{(\mu, \beta)}(t)w_{\mu+1,\beta}(t)dt\\
  = & \, B(\beta,d)(\beta+j) \int_{-1}^1 P_{j}^{(\mu+1, \beta-1)}(t)\,P_j^{(\mu,\beta)}(t) w_{\mu+1,\beta-1}(t)dt\\
  = & \, B(\beta,d)(\beta+j) \frac{k_{j}^{(\mu, \beta)}}{k_{j}^{(\mu+1, \beta-1)}} h_{j}^{(\mu+1, \beta-1)} 
  = [2\beta -(d-2)](\beta+j) h_{j}^{(\mu+1, \beta-1)}.
\end{align*}
Next, by \eqref{derJ} and the orthogonality of the Jacobi polynomials, half of the second term becomes 
\begin{align*}
C_2:= &\, B(\beta,d) \int_{-1}^1 P_{j}^{(\mu, \beta)}(t)\frac{d}{d t}P_{j}^{(\mu, \beta)}(t)w_{\mu+1,\beta}(t)dt\\
=&\, B(\beta,d)\frac{j+\mu+\beta+1}{2}\int_{-1}^1 P_{j}^{(\mu, \beta)}(t) (1-t) P_{j-1}^{(\mu+1, \beta+1)}(t)w_{\mu,\beta}(t)dt\\
=&-B(\beta,d)\frac{j+\mu+\beta+1}{2}\frac{k_{j-1}^{(\mu+1, \beta+1)}}{k_{j}^{(\mu, \beta)}} h_j^{(\mu,\beta)} = -[2\beta -(d-2)] j \,h_j^{(\mu,\beta)}.
\end{align*}
Finally, using \eqref{derJ}, the third terms becomes 
$$
C_3: = 4 \int_{-1}^1 \left(\frac{d}{d t}P_{j}^{(\mu, \beta)}(t)\right)^2 w_{\mu+1,\beta+1}(t)dt
  =  (j+\mu+\beta+1)^2 h_{j-1}^{(\mu+1,\beta+1)}.
$$
Combining these terms we obtain 
$$
    C_j^{(\mu,\beta)} = \frac{C_1+C_2+C_3}{b_{j}^{(\mu,\beta)} \, h_{j-1}^{(\mu+1,\beta)}}   
$$
which simplifies to the formula \eqref{C_j}. 
\end{proof}

The recurrence relation \eqref{d-rec} shows that $d_j^{(\mu,\beta)}(\lambda)$ can be expressed as a 
continuous fraction. Consequently, we can express $d_j^{(\mu,\beta)}(\lambda)$ in terms of a rational 
function of $\lambda$ whose numerator and denominator are, respectively, the $(j-1)$th and $j$th 
elements of a sequence of orthogonal polynomials.

\begin{defn} 
For $j \in \NN_0$, define the polynomials $r_j^{(\mu,\b)}$ by $r_0^{(\mu,\b)}(\l) =1$ and 
\begin{equation} \label{3term-rj}
 r_{j+1}^{(\mu,\b)}(\lambda) = (C_j^{(\mu,\beta)} \lambda + B_j^{(\mu,\beta)})r_j^{(\mu,\b)}(\lambda) 
     - A_{j-1}^{(\mu,\beta)}r_{j-1}^{(\mu,\b)}(\lambda), 
\end{equation}
where we assume $r_{-1}^{(\mu,\b)}(\lambda) = 0$ and, for $j \ge 1$, $A_j^{(\mu,\beta)}$, $B_j^{(\mu,\beta)}$ and 
$C_j^{(\mu,\beta)}$ are defined in \eqref{A_j}, \eqref{B_j}, \eqref{C_j}, respectively, whereas for $j =0$, 
\begin{align*}
& A_0^{(\mu,\beta)}:=\frac{(\mu+1)(\beta+1)(\mu+\beta)}{(\mu+\beta+2)(\mu+\beta+3)},\qquad
   B_0^{(\mu,\beta)}:=\frac{(\mu+1)(\mu+\beta)}{(\mu+\beta+2)},\\
& C_0^{(\mu,\beta)}:=\frac{(\mu+1)(\mu+\beta)(2\beta-(d-2))}{2}.
\end{align*}
\end{defn}

For $\mu > -1$ and $\beta {>} (d-2)/2$, the coefficients $A_j^{(\mu,\beta)}$, $B_j^{(\mu,\beta)}$ and 
$C_j^{(\mu,\b)}$ are all positive for $ j \ge 0$. Consequently, by the Favard's theorem, the polynomials
$r_n^{(\mu,\b)}$ are orthogonal with respect to a positive linear functional. Since these coefficients
are explicitly known, one naturally asks if it is possible to identify these orthogonal polynomials, say 
with some classical orthogonal polynomials. The {numerator} of the coefficient $C_j^{(\mu,\b)}$ in \eqref{C_j} 
contains a quadratic polynomial in $j$ that does not factor into product of linear factors for independent
parameters of $\beta, \mu$ and $d$. This shows that the orthogonal polynomials are not hypergeometric
type in general. In some special cases, such as $3 (\mu+1) = 4 (\b - (d-2))$ or $\beta = (d-2)/2$, the 
quadratic term does factor and the corresponding orthogonal polynomials could be of hypergeometric 
type. For our study of the Sobolev polynomials on the ball in the next section, $\beta = n - 2j + \f{d-2}{2}$ 
for integers $n$ and $j$ with $0 \le j \le n/2$, the factorization happens only when $n - 2j =0$. Since this
is a sidetrack from our main purpose, we shall not pursue this direction any {further}. 

For our study, it is sufficient to remark that the three-term relation in \eqref{3term-rj} offers an effective 
way to generate $r_{j}^{(\mu,\b)}$. 

\begin{prop}
For $j = 0,1,2,\ldots$, the coefficients $d_j^{(\mu,\beta)}(\lambda)$ in \eqref{def-d} satisfy 
\begin{equation}\label{dj-induct}
d_j^{(\mu,\beta)}(\lambda) = - A_j^{(\mu,\beta)} \frac{r_j^{(\mu,\beta)}(\lambda)}{r_{j+1}^{(\mu,\beta)}(\lambda)}.
\end{equation}
\end{prop}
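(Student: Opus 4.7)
The plan is a straightforward induction on $j$, matching the recurrence \eqref{d-rec} for $d_j^{(\mu,\beta)}(\lambda)$ against the three-term recurrence \eqref{3term-rj} for $r_j^{(\mu,\beta)}(\lambda)$. The identity \eqref{dj-induct} is exactly the kind of rational expression that a continued-fraction expansion produces, with the denominator and numerator evolving through a Jacobi-type recursion.

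For the base case $j=0$, I would set $r_0^{(\mu,\beta)} \equiv 1$ and compute $r_1^{(\mu,\beta)}(\lambda) = C_0^{(\mu,\beta)}\lambda + B_0^{(\mu,\beta)}$ directly from \eqref{3term-rj} using $r_{-1}^{(\mu,\beta)}=0$. Then I would insert the explicit expressions for $A_0^{(\mu,\beta)}, B_0^{(\mu,\beta)}, C_0^{(\mu,\beta)}$ given in the definition and verify that
$$
-A_0^{(\mu,\beta)}\frac{r_0^{(\mu,\beta)}(\lambda)}{r_1^{(\mu,\beta)}(\lambda)} = -\frac{\frac{(\mu+1)(\beta+1)(\mu+\beta)}{(\mu+\beta+2)(\mu+\beta+3)}}{\frac{(\mu+1)(\mu+\beta)}{\mu+\beta+2}\left[1+\lambda(\mu+\beta+2)\left(\beta-\tfrac{d-2}{2}\right)\right]}
$$
collapses to the formula for $d_0^{(\mu,\beta)}(\lambda)$ in \eqref{d-cero}. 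This is just algebraic simplification after factoring $\frac{(\mu+1)(\mu+\beta)}{\mu+\beta+2}$ out of the denominator.

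For the inductive step, assume $d_{j-1}^{(\mu,\beta)}(\lambda) = -A_{j-1}^{(\mu,\beta)}\, r_{j-1}^{(\mu,\beta)}(\lambda)/r_j^{(\mu,\beta)}(\lambda)$. Substituting into the recurrence \eqref{d-rec} gives
$$
d_j^{(\mu,\beta)}(\lambda) = -\frac{A_j^{(\mu,\beta)}}{B_j^{(\mu,\beta)}+\lambda C_j^{(\mu,\beta)} - A_{j-1}^{(\mu,\beta)}\dfrac{r_{j-1}^{(\mu,\beta)}(\lambda)}{r_j^{(\mu,\beta)}(\lambda)}} = -\frac{A_j^{(\mu,\beta)}\, r_j^{(\mu,\beta)}(\lambda)}{(C_j^{(\mu,\beta)}\lambda+B_j^{(\mu,\beta)})r_j^{(\mu,\beta)}(\lambda)-A_{j-1}^{(\mu,\beta)}r_{j-1}^{(\mu,\beta)}(\lambda)}.
$$
The denominator is precisely $r_{j+1}^{(\mu,\beta)}(\lambda)$ by \eqref{3term-rj}, which closes the induction.

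There is really no serious obstacle here; the content of the lemma is the translation of a nonlinear recursion (a continued fraction for the $d_j$'s) into a ratio of consecutive terms of a linear three-term recurrence. The only bookkeeping check is the base case, where one must make sure the ``convention'' $A_0^{(\mu,\beta)}, B_0^{(\mu,\beta)}, C_0^{(\mu,\beta)}$ chosen in the definition of $r_j^{(\mu,\beta)}$ is consistent with the closed-form value of $d_0^{(\mu,\beta)}(\lambda)$ coming from \eqref{d-cero} — and the explicit formulas are manifestly set up so that this works.
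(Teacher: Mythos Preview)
Your proof is correct and follows essentially the same approach as the paper: an induction on $j$, with the base case verified from \eqref{d-cero} and the explicit values of $A_0^{(\mu,\beta)}, B_0^{(\mu,\beta)}, C_0^{(\mu,\beta)}$, and the inductive step obtained by substituting the hypothesis into \eqref{d-rec} and recognizing the denominator as $r_{j+1}^{(\mu,\beta)}(\lambda)$ via \eqref{3term-rj}. Your write-up is in fact slightly more explicit in the base-case algebra than the paper's.
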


\begin{proof}
The case of $j = 0$ follows directly from \eqref{d-cero}, which allows us to deduce, from 
$r_0^{(\mu,\beta)}(\lambda) =1$ and 
$
   r_1^{(\mu,\beta)}(\lambda) = C_0^{(\mu,\beta)}\lambda + B_0^{(\mu,\beta)},
$
the formulas for $ A_0^{(\mu,\beta)}$, $B_0^{(\mu,\beta)}$ and $C_0^{(\mu,\beta)}$. Assume that 
\eqref{dj-induct} has been established for integers up to $j-1$. Then, by \eqref{d-cero}, 
\begin{align*}
d_{j}^{(\mu,\beta)}(\lambda) =& -\frac{A_j^{(\mu,\beta)}}{(B_j^{(\mu,\beta)} + \lambda C_j^{(\mu,\beta)}) + 
    d_{j-1}^{(\mu,\beta)}} \\
  = & -\frac{A_j^{(\mu,\beta)}}{(B_j^{(\mu,\beta)} + \lambda C_j^{(\mu,\beta)}) - A_{j-1}^{(\mu,\beta)}\frac{\displaystyle{r_{j-1}^{(\mu,\beta)}(\lambda)}}{\displaystyle{r_{j}^{(\mu,\beta)}(\lambda)}}}\\
 =& -\frac{A_j^{(\mu,\beta)}r_j^{(\mu,\beta)}(\lambda)}{(B_j^{(\mu,\beta)} + \lambda C_j^{(\mu,\beta)})r_j^{(\mu,\beta)}(\lambda) - A_{j-1}^{(\mu,\beta)}r_{j-1}^{(\mu,\beta)}(\lambda)},
\end{align*}
which is \eqref{dj-induct}. By induction, this completes the proof. 
\end{proof}

\begin{cor}
Let $\mu +1> -1$ and $\beta {>} \max\{0,(d-2)/2\}$. Then, for $j\ge 0$, the Sobolev orthogonal polynomials 
$q_j^{(\mu,\b)}$ in \eqref{qj-defn} satisfy 
\begin{equation}\label{qj-Jacobi}
  q_j^{(\mu+1,\beta)}(t) = \frac{1}{a_j^{(\mu,\beta)}r_j^{(\mu,\beta)}(\l)} 
     \sum_{i =0}^j \frac{D_i^{(\mu, \beta)}}{a_i^{(\mu, \beta)}} r_i^{(\mu,\beta)}(\l)P_i^{(\mu, \beta)}(t),
\end{equation}
where $a_j^{(\mu, \beta)}$ are given in \eqref{coef-a-b}, $r_j^{(\mu,\beta)}(\l)$ are given in \eqref{3term-rj}, 
and $D_j^{(\mu, \beta)}$ are defined by $D_0^{(\mu, \beta)} = (\mu + \b)(\mu+\b+1)$ and, for $j \ge 1$, 
\begin{equation}\label{Dj-defn}
D_j^{(\mu, \beta)} : = \frac{ 2^j (j+\mu+\b +1)(2j+\mu+\b) (\frac{\mu+\b+1}{2})_j (j-1)!}{(\mu+1)_j (\b+1)_j}.
\end{equation}
Furthermore, the {Sobolev norm} of $q_j^{(\mu,\b)}$ is given by 
\begin{equation}\label{norm-qj}
  \left(q_j^{(\mu+1,\b)},  q_j^{(\mu+1,\b)} \right)_{\mu+1,\b}  = 
       \frac{b_{j+1}^{(\mu,\b)}  h_j^{(\mu+1,\b)} } {  A_j^{(\mu,\b)}} \frac{ r_{j+1}^{(\mu,\b)}(\l)}{r_{j}^{(\mu,\b)}(\l)},
\end{equation}
where $b_j^{(\mu, \beta)}$, $h_j^{(\mu+1,\b)}$ and $A_j^{(\mu,\b)}$ are given in \eqref{coef-a-b}, \eqref{normJ} 
and \eqref{A_j}, respectively. 
\end{cor}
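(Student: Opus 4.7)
The plan is to prove the two assertions of the corollary separately, both by pure assembly from the recursions that have already been established. No new ingredient beyond \eqref{1-2-one-v} and the identification of $d_j^{(\mu,\beta)}(\lambda)$ from \eqref{dj-induct} is required.

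First, I would dispatch the norm formula \eqref{norm-qj}, which requires no induction at all. The definition \eqref{def-d} provides one expression for $d_j^{(\mu,\beta)}(\lambda)$ involving the Sobolev norm $\tilde h_j^{(\mu+1,\beta)}=(q_j^{(\mu+1,\beta)},q_j^{(\mu+1,\beta)})_{\mu+1,\beta}$, while \eqref{dj-induct} provides another in terms of the polynomials $r_j^{(\mu,\beta)}(\lambda)$. Equating these two representations and solving algebraically for $\tilde h_j^{(\mu+1,\beta)}$ yields \eqref{norm-qj} at once.

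Next, for the expansion \eqref{qj-Jacobi}, I would induct on $j$. The base $j=0$ is immediate because $q_0^{(\mu+1,\beta)}\equiv 1$. For the inductive step, solve \eqref{1-2-one-v} for $q_j^{(\mu+1,\beta)}$ and substitute the explicit value $d_{j-1}^{(\mu,\beta)}(\lambda) = -A_{j-1}^{(\mu,\beta)} r_{j-1}^{(\mu,\beta)}(\lambda)/r_j^{(\mu,\beta)}(\lambda)$ from \eqref{dj-induct}. Multiplying through by $r_j^{(\mu,\beta)}(\lambda)$ and plugging in the inductive hypothesis for $q_{j-1}^{(\mu+1,\beta)}$ produces a linear combination of $r_i^{(\mu,\beta)}(\lambda)\,P_i^{(\mu,\beta)}(t)$ for $0\le i\le j$, with the prescribed overall prefactor.

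The main obstacle lies in the bookkeeping for these coefficients. The induction naturally generates coefficients that are telescoping products of $A_k^{(\mu,\beta)}$ and $a_k^{(\mu,\beta)}$, and matching them against $D_i^{(\mu,\beta)}/a_i^{(\mu,\beta)}$ from \eqref{Dj-defn} reduces to an elementary ratio identity between consecutive $D_i^{(\mu,\beta)}$, anchored at $D_0^{(\mu,\beta)}=(\mu+\beta)(\mu+\beta+1)$. This identity can then be verified from the closed forms in \eqref{coef-a-b}, \eqref{A_j}, and \eqref{Dj-defn} by routine Pochhammer-symbol algebra. Once this ratio identity is in hand, both parts of the corollary follow immediately.
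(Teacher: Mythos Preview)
Your proposal is correct and follows essentially the same route as the paper: the norm formula \eqref{norm-qj} is obtained exactly as you say by equating \eqref{def-d} with \eqref{dj-induct}, and for \eqref{qj-Jacobi} the paper also solves \eqref{1-2-one-v} for $q_j^{(\mu+1,\beta)}$, substitutes \eqref{dj-induct}, and verifies the ratio identity $A_{j-1}^{(\mu,\beta)}/a_j^{(\mu,\beta)} = D_{j-1}^{(\mu,\beta)}/D_j^{(\mu,\beta)}$ (with the case $j=1$ fixing $D_0^{(\mu,\beta)}$). The only cosmetic difference is that the paper packages the induction as a telescoping sum via $\widetilde q_j := D_j^{(\mu,\beta)} r_j^{(\mu,\beta)}(\lambda)\, q_j^{(\mu+1,\beta)}$, which is the same argument in slightly different clothing.
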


\begin{proof}
Using \eqref{dj-induct}, the relation \eqref{1-2-one-v} can be rewritten as 
$$
  q_j^{(\mu+1,\beta)}(t) = \frac{1}{a_j^{(\mu,\beta)}} P_j^{(\mu, \beta)}(t) + 
    \frac{A_{j-1}^{(\mu,\beta)}}{a_j^{(\mu,\beta)}}  \frac{r_{j-1}^{(\mu,\beta)}(\l)} {r_{j}^{(\mu,\beta)}(\l)} 
      q_{j-1}^{(\mu+1,\beta)}(t).
$$
For $j \ge 2$, it is easy to check that ${A_{j-1}^{(\mu,\beta)}}/ {a_j^{(\mu,\beta)}} = 
{D_{j-1}^{(\mu,\beta)}}/{D_j^{(\mu,\beta)}}$, so that the above identity can be written as
$$
 \wt q_j^{(\mu+1,\beta)}(t) = \frac{D_j^{(\mu, \beta)}}{a_j^{(\mu,\beta)}}r_j^{(\mu,\beta)}(\l) P_j^{(\mu, \beta)}(t) + 
         \wt q_{j-1}^{(\mu+1,\beta)}(t),
$$
where $\wt q_j^{(\mu,\beta)}(t) = D_j^{(\mu,\beta)}r_j^{(\mu,\beta)}(\lambda) q_j^{(\mu+1,\beta)}(t)$. 
Furthermore, setting $j  =1$ in \eqref{1-2-one-v} shows that the above relation also holds for $j =1$
(which is how the value of $D_0^{(\mu,\b)}$ is determined). In particular, it follows that 
$\wt q_0^{(\mu,\beta)}(t) = D_0^{(\mu,\beta)}$. Summing up the telescoping sequence shows that 
 $\wt q_j^{(\mu+1,\beta)}(t)$ can be written as a sum in terms of the Jacobi polynomials, which 
proves \eqref{qj-Jacobi}. 

Finally, \eqref{norm-qj} follows directly from \eqref{def-d} and \eqref{dj-induct}. 
\end{proof}

\section{Sobolev inner product on the unit ball}
\setcounter{equation}{0}

Recall that our main task is to consider orthogonal polynomials with respect to the inner 
product \eqref{eq:main-ip}, which we restate below. 

\begin{defn} 
Let $\l > 0$ and $\mu > -1$. For $f, g \in \Pi^d$, we define 
\begin{equation*}
\la f,g\ra _{\nabla, W_\mu, \BB^d} := b_\mu \left[\int_{\BB^d} f(x) g(x)  W_\mu(x)  dx +
    \lambda  \int_{\BB^d} \nabla f(x)\,\cdot \nabla g(x) W_\mu(x) dx\right],
\end{equation*}
\end{defn}

It is easy to see that $\la \cdot,\cdot \ra_{\nabla, W_\mu, \BB^d}$ is indeed an inner product. Let 
$\mathcal{V}_n^d(\nabla, W_\mu, \BB^d)$ denote the linear space of polynomials of total degree 
$n$ that are orthogonal to all polynomial of lower {degree} with respect to this inner product. It 
follows then that $\dim \, \mathcal{V}_n^d(\nabla, W_\mu, \BB^d) = r_n^d.$

An orthogonal basis for $\CV_n^d(\nabla, W_\mu, \BB^d)$ is given in the following theorem. 

\begin{thm}
Let $\lambda >0$. For $0 \le j \le n/2$, let $\b_j: = n-2j+ \f{d-2}2$ and let $q_k^{(\mu, \b_j)}(t)$ be the $k$--th
Sobolev orthogonal polynomial associated with the inner product $(\cdot, \cdot)_{\mu, \b_j}$. 
Let $\{Y_\nu^{n-2j}: 1 \le \nu \le a_{n-2j}^d\}$ be an orthonormal basis of $\CH_{n-2j}^d$. Define 
\begin{equation}\label{baseR}
   R_{j,\nu}^{n}(x) := q_{j}^{(\mu, \b_j)}(2 \|x\|^2 -1) Y_\nu^{n-2j}(x).
\end{equation}
Then the set $\{R^n_{j,\nu}(x):  0\le j \le n/2, \quad 0\le \nu \le a_{n-2j}^d\}$ is a mutually orthogonal basis of
$\mathcal{V}_n^d(\nabla, W_\mu, \BB^d)$. Moreover, 
$$
  \la R_{j,\nu}^n, R_{j,\nu}^n \ra_{\nabla, W_\mu, \BB^d} =  \frac{\Gamma(\mu+1+ \frac d 2)}{
     \Gamma(\mu+1)\Gamma(\frac d 2)2^{\b_j +\mu} } 
      \left (q_j^{(\mu,\b_j)}, q_j^{(\mu,\b_j)}\right)_{\mu, \b_j}.
$$
\end{thm}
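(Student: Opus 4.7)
The plan is to compute $\la R_{j,\nu}^n, R_{k,\eta}^m\ra_{\nabla, W_\mu,\BB^d}$ in spherical--polar coordinates and reduce it to the one-variable Sobolev inner product $(\cdot,\cdot)_{\mu,\beta}$ of Section~4 applied to the radial factors $q_j^{(\mu,\beta_j)}$, whereupon mutual orthogonality of the $R$'s follows from orthogonality of the $q_j$'s together with spherical-harmonic orthogonality.

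Writing $R_{j,\nu}^n(r\xi) = \phi_j(r)\, Y_\nu^{\ell_j}(\xi)$ with $\ell_j := n-2j$ and $\phi_j(r) := q_j^{(\mu,\beta_j)}(2r^2-1)\, r^{\ell_j}$, the decomposition \eqref{nabla-radial} together with the fact that $\nabla_0$ annihilates radial functions gives
\[
\nabla R_{j,\nu}^n = \tfrac{\phi_j(r)}{r}\,\nabla_0 Y_\nu^{\ell_j}(\xi) + \phi_j'(r)\,\xi\, Y_\nu^{\ell_j}(\xi).
\]
The two cross-terms in $\nabla R_{j,\nu}^n\cdot\nabla R_{k,\eta}^m$ vanish by Lemma~\ref{lem:2.2-harmonic}(i), leaving $\frac{\phi_j\phi_k}{r^2}\nabla_0 Y_\nu^{\ell_j}\cdot\nabla_0 Y_\eta^{\ell_k} + \phi_j'\phi_k'\, Y_\nu^{\ell_j} Y_\eta^{\ell_k}$. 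I would integrate in polar coordinates via \eqref{changevar} and perform the $\sph$ integral first: whenever $\ell_j\ne\ell_k$, or $\ell_j=\ell_k$ with $\nu\ne\eta$, both spherical integrals vanish (for $YY$ by orthogonality of spherical harmonics, for $\nabla_0 Y\cdot\nabla_0 Y$ by Green's formula \eqref{Green-Beltrami} combined with \eqref{eigen}), and the un-differentiated piece of the inner product vanishes similarly. So the only nontrivial case is $\ell_j=\ell_k=:\ell$ (whence $\beta_j=\beta_k=:\beta$) with $\nu=\eta$, in which the spherical integrals evaluate to $\ell(\ell+d-2)\omega_d$ and $\omega_d$ respectively.

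The residual one-variable integral in $r\in(0,1)$ I would attack with the substitution $u=r^2$ followed by $t=2u-1$, sending the domain to $[-1,1]$ with Jacobi weights up to powers of~$2$. Expanding $\phi_j'(r) = r^{\ell-1}[\ell\,q_j(2r^2-1) + 4r^2\,q_j'(2r^2-1)]$ and combining the $q_j q_k$ contribution of $\phi_j'\phi_k'$ with $\ell(\ell+d-2)\phi_j\phi_k/r^2$, the weighted integrand in $t$ becomes a linear combination of $q_j q_k$, $(1+t)(q_j q_k'+q_j' q_k)$ and $(1+t)^2 q_j' q_k'$ integrated against $w_{\mu,\beta-1}$. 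The crucial identities
\[
2\ell = 2\beta - (d-2) = B(\beta,d), \qquad \ell(2\ell+d-2) = \beta\, B(\beta,d) = A(\beta,d),
\]
both immediate from $\beta=\ell+(d-2)/2$, arrange for these coefficients to coincide exactly with those in the matrix weight of \eqref{sob-1-v}. Combined with the un-differentiated piece, which after the same substitution produces $\int q_j q_k\, w_{\mu,\beta}\, dt$ against the matching scalar, this gives
\[
\la R_{j,\nu}^n, R_{k,\eta}^m\ra_{\nabla, W_\mu,\BB^d} = \kappa_{\mu,\beta,d}\,\bigl(q_j^{(\mu,\beta)}, q_k^{(\mu,\beta)}\bigr)_{\mu,\beta}\,\delta_{\ell_j,\ell_k}\,\delta_{\nu,\eta}
\]
for a scalar $\kappa_{\mu,\beta,d}$ independent of $j,k$. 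Mutual orthogonality of $\{q_j^{(\mu,\beta)}\}_j$ then forces $j=k$ (and so $n=m$), and the stated norm formula is read off by setting $j=k$ and simplifying $b_\mu\omega_d$ via \eqref{b-mu-omega}.

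That each $R_{j,\nu}^n$ actually lies in $\CV_n^d(\nabla, W_\mu,\BB^d)$, i.e., is orthogonal to $\Pi_{n-1}^d$, follows from the mutual orthogonality just proved: the collection $\{R_{k,\eta}^m : m\le n-1\}$ is linearly independent of cardinality $\sum_{m=0}^{n-1} r_m^d = \dim\Pi_{n-1}^d$, hence a basis of $\Pi_{n-1}^d$, and $R_{j,\nu}^n$ is orthogonal to every element. The dimension count $\sum_{j=0}^{\lfloor n/2\rfloor} a_{n-2j}^d = r_n^d = \dim\CV_n^d(\nabla, W_\mu,\BB^d)$ then confirms that $\{R_{j,\nu}^n\}$ is a full mutually orthogonal basis. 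The main obstacle is the algebraic bookkeeping in the radial reduction above: the identifications $2\ell = B(\beta,d)$ and $\ell(2\ell+d-2) = A(\beta,d)$ are precisely what render the otherwise unusual definition \eqref{sob-1-v} of $(\cdot,\cdot)_{\mu,\beta}$ tailor-made for this $d$-dimensional Sobolev inner product.
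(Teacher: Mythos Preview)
Your proposal is correct and follows essentially the same strategy as the paper: separate variables in spherical--polar coordinates, integrate out the spherical factor using the orthogonality of $Y_\nu^\ell$ (and the Green--Beltrami identity for the $\nabla_0$ piece), and reduce to a one-variable integral that is identified with $(\,\cdot\,,\,\cdot\,)_{\mu,\beta_j}$ via the key identities $2\ell=B(\beta,d)$ and $\ell(2\ell+d-2)=A(\beta,d)$. The only organizational difference is that the paper expands $\nabla R_{j,\nu}^n\cdot\nabla R_{k,\eta}^m$ via the Cartesian product rule (producing four terms and invoking \eqref{nabla-Y-int} directly), whereas you invoke the polar decomposition \eqref{nabla-radial} of $\nabla$ first and appeal to \eqref{Green-Beltrami}+\eqref{eigen}; these are equivalent rearrangements of the same computation, and your added dimension count for completeness is a nice touch the paper leaves implicit.
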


\begin{proof}
We need to {calculate}, 
\begin{align*}
  \la R_{j,\nu}^{n}(x), R_{k,\eta}^{m}(x)\ra_{\nabla, W_\mu, \BB^d} 
      = & b_\mu \left[\int_{\BB^d} R_{j,\nu}^{n}(x) R_{k,\eta}^{m}(x)W_\mu(x) dx \right. \\
    & \quad + \lambda \left.\int_{\BB^d} \nabla R_{j,\nu}^{n}(x)\cdot \nabla R_{k,\eta}^{m}(x) W_\mu(x) \, dx\right].
\end{align*}
We consider the two integrals in the right hand side separately. For the first integral, using \eqref{changevar} 
and the fact that $Y_\nu^{n-2j}$ is homogenous and orthonormal {with} respect to $\la \cdot,\cdot \ra_{\sph}$,
we obtain, 
\begin{align*}
& b_\mu \int_{\BB^d} R_{j,\nu}^{n}(x) R_{k,\eta}^{m}(x)W_\mu(x) dx \\
  & \quad = b_\mu \o_d \int_0^1 q_j (2r^2 -1) q_k (2r^2 -1) r^{2 \b_j +1}  
       (1-r^2)^{\mu}\delta_{n-2j,m-2k}\delta_{\nu,\eta} \\
   & \quad =  \frac{b_\mu \o_d}{2^{n-2j+\mu+d/2+1}}
      \left[\int_{-1}^1 q_{j}  (t)\,q_{k} (t)(1-t)^{\mu},(1+t)^{n-2j+\frac{d-2}{2}}dt \right]\delta_{n-2j,m-2k}\delta_{\nu,\eta},
\end{align*}
where we have omitted the superscript of $q_j^{(\mu,n-2j+\f{d-2}{2})}$ for simplicity, which we shall adopt
in the rest of this proof. 

For the second integral, we observe that, by the product rule of differentiation, 
$$
  \partial_i Q_{j,\nu}^{n}(x; W_\mu) = q'_{j}(2\,\|x\|^2 -1)\, 4\, x_i\, Y_\nu^{n-2j}(x) 
      + q_{j}(2\,\|x\|^2 -1)\, \partial_i Y_\nu^{n-2j}(x),
$$
which implies that 
\begin{align*}
  \nabla R_{j,\nu}^{n}(x)\cdot\nabla R_{k,\eta}^{m}(x)
= &   q_{j}(2\,\|x\|^2 -1)\,q_{k}(2\,\|x\|^2 -1)\, \nabla Y_\nu^{n-2j}(x)\cdot \nabla Y_\eta^{m-2k}(x)\\ 
& + 4 (m-2k) q'_{j}(2\|x\|^2 -1) q_{k}(2 \|x\|^2 -1) Y_\nu^{n-2j}(x) Y_\eta^{m-2k}(x) \\
& + 4 (n-2j) q_{j}(2\|x\|^2 -1) q'_{k}(2\|x\|^2 -1) Y_\nu^{n-2j}(x)  Y_\eta^{m-2k}(x) \\
& + 16  \|x\|^2 q'_{j}(2\,\|x\|^2 -1)q'_{k}(2\|x\|^2 -1) Y_\nu^{n-2j}(x) Y_\eta^{m-2k}(x).
\end{align*}
We now integrate above expression term by term and apply \eqref{changevar}. For the first term,
we obtain, using \eqref{nabla-Y-int}, that 
\begin{align*}
& \int_{\BB^d} q_{j}(2\|x\|^2 -1)q_{k}(2\|x\|^2 -1) \nabla Y_\nu^{n-2j}(x)\cdot \nabla Y_\eta^{m-2k}(x)W_\mu(x)dx\\
& \qquad = \int_{0}^1 q_{j}(2r^2 -1)q_{k}(2r^2 -1) (1-r^2)^{\mu}r^{n-2j+m-2k+d-1}\\
& \qquad \qquad \times \frac{(n-2j)(2(n-2j)+d-2)}{r^2} dr \omega_{d}\delta_{n-2j,m-2k}\delta_{\nu,\eta} \\
& \qquad = \left[\int_{-1}^1 q_{j}(t)\,q_{k}(t)(1-t)^{\mu} (1+t)^{n-2j+d/2-2}\,dt \right]\\
& \qquad \qquad \times \frac{(n-2j)(2(n-2j)+d-2)\,\omega_{d}} {2^{n-2j+\mu+d/2}}\delta_{n-2j,m-2k}\delta_{\nu,\eta}.
\end{align*}
The second and the third terms are similar. For the second term,  we obtain
\begin{align*}
& 4(m-2k)  \int_{\BB^d} q'_{j}(2\|x\|^2 -1)q_{k}(2\|x\|^2 -1) Y_\nu^{n-2j}(x) Y_\eta^{m-2k}(x)W_\mu(x)dx\\
 =\ & 4(m-2k)\int_{0}^1 q'_{j}(2r^2 -1)\,q_{k}(2r^2 -1) (1-r^2)^{\mu}r^{2n-4j+d-1}dr  \omega_{d}\delta_{n-2j,m-2k}\delta_{\nu,\eta}\\
 =\ & \int_{-1}^1 q'_{j}(t) q_{k}(t)(1-t)^{\mu}(1+t)^{n-2j+d/2-1}dt  \frac{2(m-2k)\,\omega_{d}}{2^{n-2j+\mu+d/2}}\delta_{n-2j,m-2k}\delta_{\nu,\eta}.
\end{align*}
Finally, for the fourth term, we obtain
\begin{align*}
&16   \int_{\BB^d} \|x\|^2 q'_{j}(2\|x\|^2 -1)q'_{k}(2\|x\|^2 -1)Y_\nu^{n-2j}(x) Y_\eta^{m-2k}(x)W_\mu(x)dx \\
=& 16 \int_{0}^1 q'_{j}(2r^2 -1)\,q'_{k}(2r^2 -1) (1-r^2)^{\mu}r^{2n-4j+d+1}dr 
    \omega_{d}\delta_{n-2j,m-2k}\delta_{\nu,\eta}\\
= & \int_{-1}^1 q'_{j}(t)q'_{k}(t)(1-t)^{\mu} (1+t)^{n-2j+d/2} dt
   \frac{\omega_{d}}{2^{n-2j+\mu+d/2-2}}\delta_{n-2j,m-2k}\delta_{\nu,\eta}.
\end{align*}

Putting all four terms together, we conclude that the second integral satisfies 
\begin{align*}
& \int_{\BB^d} \nabla R_{j,\nu}^{n}(x)\cdot \nabla R_{k,\eta}^{m}(x) W_\mu(x) dx =
      \frac{\omega_{d-1}}{2^{n-2j+\mu+d/2}}\,\delta_{n-2j,m-2k}\,\delta_{\nu,\eta}\\
& \quad \times \left[(n-2j)(2(n-2j)+d-2)\int_{-1}^1 q_{j}(t)q_{k}(t)(1-t)^{\mu}(1+t)^{n-2j+d/2-2}dt \right.\\
& \qquad + 2 (n-2j) \int_{-1}^1 (q_{j}(t)q_{k}(t))'(1-t)^{\mu} \,(1+t)^{n-2j+d/2-1}dt\\
&\qquad + \left. 4\int_{-1}^1 q'_{j}(t)q'_{k}(t)(1-t)^{\mu} (1+t)^{n-2j+d/2}dt\right].
\end{align*}
Together, the first and the two integrals lead to 
\begin{align*}
  \la R_{j,\nu}^{n}, R_{k,\eta}^{m} \ra_{\nabla, W_\mu, \BB^d} = \frac{b_\mu \omega_d}{2^{n-2j+\mu+d/2}}            
     \delta_{n-2j,m-2k}\delta_{\nu,\eta} (q_j, q_k)_{\mu, n-2j+\f{d-2}{2}}. 
\end{align*}
Using \eqref{b-mu-omega}, this completes the proof. 
\end{proof}
 
By \eqref{norm-qj}, the norm of $R_{j,\nu}^n$ can be {expressed} in terms of $r_i^{\mu, \b_j}$, which can
be computed recursively. Moreover, as a corollary of \eqref{1-2-one-v}, we also have the following relation 
between classical orthogonal polynomials $P_{j,\nu}^n$ on the ball and our Sobolev orthogonal polynomials. 
Let us denote $P_{j,\nu}^n$ in $\CV_n^d(W_\mu)$ by $P_{j,\nu}^n(\cdot;W_\mu)$ and, similarly, denote 
$R_{j,\nu}^n(x)$ in $\CV_n^d(\nabla, W_\mu, \BB^d)$ by $R_{j,\nu}^n(x;W_\mu)$ to emphasis the 
dependence on $\mu$. 

\begin{cor}
Let $\mu > -1$. For $0 \le j \le n/2$, let $\b_j = n-2j + \f{d-2}{2}$. Then 
\begin{equation*}
P_{j,\nu}^{n}(x; W_\mu) = a_j^{(\mu,\b_j)} R_{j,\nu}^{n}(x; W_{\mu+1}) + d_{j-1}^{(\mu,\b_j)}(\l)
     R_{j-1,\nu}^{n-2}(x; W_{\mu+1}).
\end{equation*}
Furthermore, 
\begin{equation*} 
 R_{j,\nu}^{n}(x; W_{\mu+1}) = \frac{1}{a_j^{(\mu,\beta)} r_j^{(\mu,\beta)}(\l)} 
     \sum_{i =0}^j \frac{D_i^{(\mu, \beta)}}{a_i^{(\mu, \beta)}} r_i^{(\mu,\beta)}(\l)P_{i,\nu}^{n}(x; W_\mu),
\end{equation*}
where the notations are same as those in \eqref{qj-Jacobi}.
\end{cor}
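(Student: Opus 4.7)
The corollary is essentially a direct lift of the one-variable identities \eqref{1-2-one-v} and \eqref{qj-Jacobi} to the ball, using the product structure of the bases $P_{j,\nu}^n(x;W_\mu)$ and $R_{j,\nu}^n(x;W_{\mu+1})$ in spherical--polar coordinates. My plan is to multiply the one-variable relations by the spherical harmonic $Y_\nu^{n-2j}(x)$, substitute $t = 2\|x\|^2 - 1$, and then match the resulting factors with the definitions \eqref{baseP} and \eqref{baseR}. The only subtlety is tracking how the Jacobi parameter $\b = \b_j = n-2j+\frac{d-2}{2}$ and the spherical harmonic degree $n-2j$ transform under the substitution $(n,j) \mapsto (n-2, j-1)$.

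First I would handle the first identity. Take $\b = \b_j$ in \eqref{1-2-one-v}, evaluate at $t=2\|x\|^2-1$, and multiply through by $Y_\nu^{n-2j}(x)$. The left-hand side becomes $P_{j,\nu}^n(x;W_\mu)$ by \eqref{baseP}. For the right-hand side, the term $a_j^{(\mu,\b_j)} q_j^{(\mu+1,\b_j)}(2\|x\|^2-1)Y_\nu^{n-2j}(x)$ is, by \eqref{baseR}, exactly $a_j^{(\mu,\b_j)} R_{j,\nu}^n(x;W_{\mu+1})$. The key computation is to recognize $q_{j-1}^{(\mu+1,\b_j)}(2\|x\|^2-1)Y_\nu^{n-2j}(x)$ as $R_{j-1,\nu}^{n-2}(x;W_{\mu+1})$: one checks that the Jacobi parameter associated to $R_{j-1,\nu}^{n-2}$ is $(n-2)-2(j-1)+\frac{d-2}{2} = \b_j$ and that the attached spherical harmonic has degree $(n-2)-2(j-1) = n-2j$, so both ingredients agree.

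For the second identity I would apply the same procedure to \eqref{qj-Jacobi} with $\b = \b_j$. Multiplying by $Y_\nu^{n-2j}(x)$, the left-hand side becomes $R_{j,\nu}^n(x;W_{\mu+1})$ by \eqref{baseR}, and each summand on the right takes the form
\[
\frac{D_i^{(\mu,\b_j)}}{a_i^{(\mu,\b_j)}} r_i^{(\mu,\b_j)}(\l)\,
P_i^{(\mu,\b_j)}(2\|x\|^2-1)\,Y_\nu^{n-2j}(x).
\]
The factor $P_i^{(\mu,\b_j)}(2\|x\|^2-1)Y_\nu^{n-2j}(x)$ is precisely the polynomial in the classical ball basis attached to the Jacobi parameter $\b_j$, denoted $P_{i,\nu}^n(x;W_\mu)$ in the corollary. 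Dividing through by $a_j^{(\mu,\b_j)} r_j^{(\mu,\b_j)}(\l)$ gives the stated formula.

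The main obstacle is not computational but notational: one must be vigilant about which $\b$ is active for each polynomial on the ball and recognize that the particular $(n,j)\mapsto (n-2,j-1)$ shift leaves $\b_j$ and $n-2j$ invariant, which is precisely the coincidence that makes the one-variable relation lift cleanly. Once the bookkeeping is done, both identities are immediate consequences of the one-variable results already established.
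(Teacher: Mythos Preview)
Your proposal is correct and matches the paper's approach: the paper does not give an explicit proof but simply presents the result ``as a corollary of \eqref{1-2-one-v},'' which is exactly your strategy of substituting $t=2\|x\|^2-1$, multiplying through by $Y_\nu^{n-2j}(x)$, and invoking the definitions \eqref{baseP} and \eqref{baseR}. Your observation that the shift $(n,j)\mapsto(n-2,j-1)$ preserves both $\b_j$ and the harmonic degree $n-2j$ is precisely the bookkeeping that makes the first identity work, and the second identity follows from \eqref{qj-Jacobi} in the same way.
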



\begin{thebibliography}{99}

\bibitem{AS} 
       M. Abramowitz, I. A. Stegun, 
       \textit{Handbook of mathematical functions},  
       9th printing. Dover, New York, 1972.

\bibitem{AX} 
       R. Akta\c{s} and Y. Xu, 
       Sobolev orthogonal polynomials on a simplex, 
       IMRN, 2012; doi: 10.1093/imrn/rns141
       
\bibitem{AH}
       K. Atkinson and O. Hansen,
       Solving the nonlinear Poisson equation on the unit disk,
       \textit{J. Integral Equations Appl.}\textbf{17} (2005), 223--241.

\bibitem{DX12} 
      F. Dai and Y. Xu, 
      \textit{Approximation theory and harmonic analysis on spheres and balls}, 
      Springer Monographs in Mathematics, Springer, to appear. 

\bibitem{DX01}  
      C. F. Dunkl and Y. Xu, 
      \textit{Orthogonal polynomials of several variables},
      Encyclopedia of Mathematics and its Applications \textbf{81}, Cambridge
      University Press, 2001.

\bibitem{PI}
         P. Iliev, 
         Krall-Jacobi commutative algebras of partial differential operators, 
         \textit{J. Math. Pures Appl.}  \textbf{(9) 96} (2011),  446-461.

\bibitem{Li}
        H. Li,
        A direct spectral-Galerkin method on the disc and its optimal error estimate,
        manuscript, 2012.  
        
\bibitem{Meijer}
         {H. G. Meijer,
         Determination of all coherent pairs,
         \textit{J. Approx. Theory}  \textbf{89 (3)} (1997),  321-343.}
         
\bibitem{PX08} 
      M. Pi\~nar, Y. Xu, 
      Orthogonal polynomials and partial differential equations on the unit ball, 
      \textit{Proc. Amer. Math. Soc.} \textbf{137} (2009), 2979--2987.

\bibitem{Sz} 
     G. Szeg\"{o}, 
     \textit{Orthogonal polynomials}, 
     Amer. Math. Soc., Providence, RI, 1975.

\bibitem{Xu06} 
      Y. Xu, 
      A family of Sobolev orthogonal polynomials on the unit ball,
      \textit{J. Approx. Theory} \textbf{138} (2006) 232-241.

\bibitem{Xu08} 
      Y. Xu,
      Sobolev orthogonal polynomials defined via gradient on the unit ball,  
      \textit{J. Approx. Theory}  \textbf{152} (2008) 52--65.

\end{thebibliography}
\end{document}